\def\BibTeX{{\rm B\kern-.05em{\sc i\kern-.025em b}\kern-.08em
    T\kern-.1667em\lower.7ex\hbox{E}\kern-.125emX}}
\newtheorem{theorem}{Theorem}[section]
\newtheorem{lemma}{Lemma}[section]
\newtheorem{corollary}{Corollary}[section]
\newtheorem{remark}{Remark}
\DeclareMathOperator*{\argmin}{arg\,min}
\begin{document}
\title{Polyhedral restrictions of feasibility regions in optimal power flow for distribution networks}
\author{M.H.M.\ Christianen, S.\ van Kempen, M.\ Vlasiou and B.\ Zwart
\thanks{This paper has been submitted for review on 19/10/2023. This research is supported by the Dutch Research Council through the TOP programme under contract number 613.001.801.}
\thanks{M.H.M.\ Christianen and S.\ van Kempen are with the Eindhoven University of Technology, The Netherlands (e-mail: m.h.m.christianen@tue.nl; s.f.m.v.kempen@tue.nl).}
\thanks{M.\ Vlasiou is with the University of Twente and the Eindhoven University of Technology, The Netherlands, (e-mail: m.vlasiou@utwente.nl).}
\thanks{B.\ Zwart is with the Centrum Wiskunde en Informatica, Amsterdam, The Netherlands and also with the Eindhoven University of Technology, Eindhoven, The Netherlands (e-mail: bert.zwart@cwi.nl).}}

\maketitle

\maketitle

\begin{abstract}
The optimal power flow (OPF) problem is one of the most fundamental problems in power system operations. The non-linear alternating current (AC) power flow equations that model different physical laws (together with operational constraints) lay the foundation for the feasibility region of the OPF problem.
While significant research has focused on convex relaxations, which are approaches to solve an OPF problem by enlarging the true feasibility region, the opposite approach of convex restrictions offers valuable insights as well. Convex restrictions, including polyhedral restrictions, reduce the true feasible region to a convex region, ensuring that it contains only feasible points.
In this work, we develop a sequential optimization method that offers a scalable way to obtain (bounds on) solutions to OPF problems for distribution networks.
To do so, we first develop sufficient conditions for the existence of feasible power flow solutions in the neighborhood of a specific (feasible) operating point in distribution networks, and second, based on these conditions, we construct a polyhedral restriction of the feasibility region. 
Our numerical results demonstrate the efficacy of the sequential optimization method as an alternative to existing approaches to obtain (bounds on) solutions to OPF problems for distribution networks. By construction, the optimization problems can be solved in polynomial time and are guaranteed to have feasible solutions.


\end{abstract}

\begin{IEEEkeywords}
Optimal Power Flow, Polyhedral Restriction, Sequential Optimization
\end{IEEEkeywords}

\section{Introduction}\label{sec:introduction}
The optimal power flow (OPF) problem aims to determine power generations and demands (loads) in an electricity network  to meet certain objectives such as minimizing generation cost, that are in the \emph{feasibility region}. A set of loads are in the feasibility region when they satisfy two sets of constraints. First, the non-linear AC power flow equations that model physical laws, and second, operational constraints, such as capacity and security constraints on voltages and power flows. Finding solutions to the first set of constraints is one of the main hurdles in solving OPF problems, which has been one of the most fundamental tasks in system operations \cite{Bose2011a, Gan2015b}.

Recently, the OPF problem has become more important for \emph{distribution networks}, due to the emergence of distributed generation (e.g., solar panels) and controllable loads (e.g., electric vehicles (EVs)). Unexpected changes or variations in generation or consumption of electric power make it difficult to predict distributed generation, potentially leading to constraint violations. To deal with these changes, solving the OPF problem in real-time is required.

We introduce a scalable sequential optimization method for obtaining (bounds on) solutions to OPF problems for distribution networks. This method is based on two key components. First, we develop sufficient conditions for the existence of feasible power flow solutions around a specific feasible operating point in distribution networks, and subsequently, we construct a polyhedral restriction of the true feasibility region using these conditions. Such a polyhedral restriction has two desirable characteristics. First, by construction, all points within the restriction are AC power flow feasible. This is in contrast to approximations (linearizations) and convex relaxations, which in general do not result in feasible solutions to the AC power flow equations. Second, optimizing a certain objective function of an OPF problem over a polyhedral region is as computationally efficient as the alternative path of linearizing power flow equations. Last, we compare the polyhedral restriction with existing convex restrictions and check the validity of the sequential optimization method by various numerical experiments on different distribution networks.


Finding solutions to the power flow equations is challenging due to their non-linearity. Solutions may not exist, and if they do, finding them is difficult \cite{Dvijotham2017a}.

For classical power flow algorithms there are only a few theoretical guarantees for the convergence to solutions, if they exist \cite{Giraldo2022}. More recently, new algorithms based on fixed point iterations have been introduced, since various fixed point theorems can guarantee existence and/or uniqueness of solutions to the power flow equations. In \cite{Yu2015a}, an extension of the work in \cite{Bolognani2016a}, a fixed point load-flow method is presented for radial distribution networks with a single slack bus. In the same paper, sufficient conditions are given to guarantee the existence and uniqueness of the solution. These sufficient conditions are further improved in \cite{Wang2018}. Furthermore, these conditions control the power injections in order to enforce the important voltage drop constraint in distribution networks. While we are in the same setting as in the previous mentioned works, a distinguishing factor of our work lies in the conditions established. Our approach yields affine constraints in terms of the loads, setting it apart from the work done in \cite{Yu2015a,Bolognani2016a, Wang2018}.

Having conditions on the existence of feasible solutions to the power flow equations in OPF problems is valuable, but it does not solve OPF problems. Embedding the non-linear power flow equations in optimization problems yield difficulties in solving these problems due to nonconvexity. In general, there are three ways to deal with this challenge: (i) approximate (and in most cases, linearize) the power flow equations, (ii) relax the feasibility region by making it convex and (iii) restrict the feasibility region by making it convex.

The last type of OPF methods contrasts with convex relaxations, such as Semidefinite Programming (SDP) or Second-Order Cone Programming (SOCP) relaxations \cite{Kocuk2016, Gan2015a}, since the methods restrict the non-convex OPF feasible region to become convex by removing feasible points. The solution obtained through convex restrictions are always feasible with respect to the AC power flow equations and are always a bound on the solution for the original OPF problem.

The related literature on convex restrictions is different from our work in two directions. It either lacks a guarantee on feasibility of the power flow equations or, if this is guaranteed, fails to yield affine constraints in terms of the loads.

In early work \cite{Wu1982} on convex restrictions of power flow equations, a ``security'' region is constructed. This set consists of load demands and power generations for which the power flow equations and the security constraints imposed by operating constraints are satisfied. However, this security region is constructed with the use of approximations of the power flow equations. In \cite{Wei2017}, a sequential convex optimization method is proposed to solve OPF problems over radial networks. However, the convex functions used to approximate the non-convex parts are not necessarily restrictions of the feasible region and therefore do no guarantee feasibility.

In \cite{Dvijotham2015a}, a new approach for the construction of convex regions where the AC power flow equations are guaranteed to have feasible solutions is introduced. The construction is based on solving SDP problems. The resulting regions, polytopes or ellipsoids, can be efficiently used for assessment of system security in the presence of uncertainty. Similar work has been done by the authors in \cite{Lee2019}, \cite{Lee2020}, \cite{Lee2021a}. In \cite{Lee2019}, a framework of constructing convex restrictions is built and applied to the power flow feasibility problem. The convex restrictions are used for the identification of a feasible path between two points \cite{Lee2020}, and extended to a robust convex restriction that accounts for uncertainty in the power injections \cite{Lee2021a}.




To summarize, the main contributions of this paper are:
\begin{enumerate}[wide,nosep]
\item We restrict the feasibility region of the AC-OPF problem to a feasible region with only affine constraints. The formulation guarantees that there exists at least one solution to the non-linear power flow equations that also satisfies operational constraints, including the important voltage drop constraints.
\item Using the polyhedral restriction in different OPF problems, we propose a sequential optimization method which in each iteration (i) constructs a polyhedral restriction around a feasible point and (ii) solves an OPF problem to obtain a better feasible point (in view of the objective function). 
\item We show the conservativeness of the polyhedral restriction and the applicability of the sequential optimization method using numerical experiments on different test cases.
\end{enumerate}

This paper is organized as follows. Section \ref{sec:model} introduces the model for a distribution network. Section \ref{sec:main_results} derives the sufficient conditions for the existence of at least one feasible solution to the power flow equations and operational constraints which can be used to construct a polyhedral restriction of the true feasible region, and extends this polyhedral restriction to develop a sequential optimization method to solve OPF problems. Section \ref{sec:numerical_results} demonstrates the use of this method in different scenarios. Section \ref{sec:conclusion} concludes the paper.

\section{Model description}\label{sec:model}
\subsection{Notation}
We consider a distribution network modeled by a rooted tree $\mathcal{G}=(\mathcal{N},\mathcal{E})$ with $\mathcal{N} = \{0\}\cup \{1,2,\ldots,N\}$ nodes (or \emph{buses}), where the root node $0$ is the main feeder (or \emph{slack bus}) and the remaining nodes are load nodes (or \emph{PQ buses}). An edge (or \emph{distribution line}) $(j,k)\in\mathcal{E}$ connects nodes $j$ and $k$.

Let $V_0$ denote the complex voltage, $I_0$ the complex current, and $s_0$ the complex power injected of the slack bus, let $\mathbf{V} = (V_1,V_2,\ldots,V_N)^T$ denote the vector of complex voltages, $\mathbf{I} = (I_1,I_2,\ldots,I_N)^T$ the vector of complex currents, and $\mathbf{s} =(s_1,s_2,\ldots,s_N)^T$ the vector of complex powers injected into the PQ buses (a positive value in the real or the imaginary part means that power is consumed).

Moreover, for a bus $j\in\mathcal{N}\backslash\{0\}$, power can be exclusive generation, consumption, both, or neither. The complex power consumption of node $j\in\mathcal{N}$ is $s_j^c = p_j^c+\mathrm{i}q_j^c$, where $p_j^c$ and $q_j^c$ denote the \emph{active} and \emph{reactive} power consumption. Similarly, the complex power generation is given by $s_j^g = p_j^g + \mathrm{i}q_j^g$. We use the term \emph{load} for power consumption minus generation and \emph{injection} for its logical counterpart. 

For a line $(j,k)\in\mathcal{E}$, the \emph{admittance} $y_{jk}$ and \emph{impedance} $z_{jk}$ are given by $z_{jk}=r_{jk}+\mathrm{i}x_{jk} = 1/y_{jk}$,
where $r_{jk}\geq 0,x_{jk}\geq 0$ are the 
\emph{resistance} and \emph{reactance} of line $(j,k)\in\mathcal{E}$, respectively. Note that $y_{jk} = y_{kj}$ and $z_{jk}=z_{kj}$. We introduce the admittance bus matrix $\mathbf{Y}\in\mathbb{C}^{(N+1)\times(N+1)}$ as
\begin{align}
    \mathbf{Y} = \begin{pmatrix}
    Y_{00} & \mathbf{Y}_{0L} \\ \mathbf{Y}_{L0} & \mathbf{Y}_{LL}
    \end{pmatrix}, \label{eq:Y}
\end{align} with $Y_{00}\in \mathbb{C}$, $\mathbf{Y}_{0L}\in\mathbb{C}^{1\times N}$, $\mathbf{Y}_{L0}\in\mathbb{C}^{N\times 1}$,$\mathbf{Y}_{LL}\in\mathbb{C}^{N\times N}$ where all elements $Y_{jk}$ of the matrix $\mathbf{Y}$ are defined by
\begin{align*}
    \quad Y_{jk} =   \begin{cases}-y_{jk}, & \text{if}\ j\neq k, \\ \sum_{m=0,m\neq j} y_{jm}, & \text{if}\  j=k, \end{cases}
\end{align*} and $y_{jk}=0$ if $(j,k)\notin \mathcal{E}$.

We define the \emph{impedance bus matrix \textbf{Z}} as the inverse of the \emph{reduced} admittance matrix $\mathbf{Y}_{LL}$, proven to be non-singular for a broad range of distribution networks \cite{Wang2018}; i.e.,
\begin{align}
    \mathbf{Z}:=\mathbf{Y}_{LL}^{-1}.\label{eq:Z}
\end{align} Due to general disadvantages of matrix inversion, 
we provide a different formulation of the $Z$-matrix \cite{Peterson1989},
\begin{align}
Z_{jk} = \sum_{(m,l)\in\mathcal{P}(j)\cap\mathcal{P}(k)} z_{ml} \label{eq:Z_{jk}},
\end{align} where $\mathcal{P}(j)$ is the unique path from node $j$ to the main feeder. Efficient methods for the construction of the $Z$-matrix exist \cite{Liu2008}. By splitting the real and imaginary parts in $\mathbf{Z} = \mathbf{R}+\mathrm{i}\mathbf{X}$, we define the elements in the \emph{resistance bus matrix} $\mathbf{R}$ and the \emph{reactance bus matrix} $\mathbf{X}$ similar as in \eqref{eq:Z_{jk}}, except for replacing elements $z_{ml}$ by $r_{ml}$ and $x_{ml}$, respectively. 
For a complex number $\mathbf{w} = a+b\mathrm{i}$, we use $\text{Re}(\mathbf{w})$ for its real part, $\text{Im}(\mathbf{w})$ for its imaginary part, $\mathbf{w}^*$ for its complex conjugate, $|\mathbf{w}| = \sqrt{a^2+b^2}$ for its magnitude, and for a matrix $\mathbf{W}$ of complex elements, $|\mathbf{W}|$ denotes the same operation applied per element.

\subsection{Fixed point equation for the voltages}
As known \cite{Wang2018}, the powers and currents can be expressed in matrix form as
\begin{align}
    -\begin{pmatrix}
    s_0^* \\ \mathbf{s}^*
    \end{pmatrix} & =
    \begin{pmatrix}
    V_0 & \mathbf{0} \\ 0 & \text{diag}(\mathbf{V}^*)
    \end{pmatrix}
    \begin{pmatrix}
    I_0 \\ \mathbf{I}
    \end{pmatrix}, \label{eq:complex_power} \\
    \begin{pmatrix}
    I_0 \\ \mathbf{I}
    \end{pmatrix} & = \mathbf{Y}
    \begin{pmatrix}
    V_0 \\ \mathbf{V}
    \end{pmatrix}.\label{eq:ohms_law}
\end{align} 

Equating both representations of $\mathbf{I}$ in \eqref{eq:complex_power} and \eqref{eq:ohms_law}, keeping in mind \eqref{eq:Z} and solving for $\mathbf{V}$ yields
\begin{align}
    \mathbf{V} = -V_0 \mathbf{Z} \mathbf{Y}_{L0}-\mathbf{Z}\text{diag}(\mathbf{V}^*)^{-1}\mathbf{s}^*.\label{eq:fixed_point}
\end{align} 

\begin{remark}
The radial network structure implies
\begin{align}
    -V_0\mathbf{Z}\mathbf{Y}_{L0} = V_0\mathbf{1}^N,\label{eq:load_profile_no_consumption}
\end{align} which is the load profile in case there is no power consumption nor generation in the network. This can be seen by considering the definition of the admittance bus matrix $\mathbf{Y}$ in \eqref{eq:Y}: $[\mathbf{Y}_{L0}\  \mathbf{Y}_{LL}]\mathbf{1}^{N+1} = \mathbf{Y}_{L0}+\mathbf{Y}_{LL}\mathbf{1}^N = \mathbf{0}$,
or equivalently $-\mathbf{Y}_{LL}^{-1}\mathbf{Y}_{L0} = -\mathbf{Z}\mathbf{Y}_{L0} = \mathbf{1}^N$,
if the reduced admittance matrix $\mathbf{Y}_{LL}^{-1}$ exists. Multiplying both sides by $V_0$ yields \eqref{eq:load_profile_no_consumption}.
\end{remark}

By using \eqref{eq:load_profile_no_consumption}, the power flow equations in \eqref{eq:fixed_point} can be rewritten as fixed-point equations:
\begin{align}
    \mathbf{V} = G(\mathbf{V}) := V_0\mathbf{1}^N - \mathbf{Z}\text{diag}(\mathbf{V}^*)^{-1}\mathbf{s}^*.\label{eq:fixed_point_eq}
\end{align}
Given a load profile $\mathbf{s}$, we perform the corresponding iterations
\begin{align*}
\mathbf{V}^{(k+1)} = V_0\mathbf{1}^N - \mathbf{Z}\text{diag}({\mathbf{V}^*}^{(k)})^{-1}\mathbf{s}^*
\end{align*} of \eqref{eq:fixed_point_eq}, to compute the corresponding voltage $\mathbf{V}$.

\begin{remark} Whenever we talk about a solution to the power flow equations, we actually talk about the pair $(\mathbf{V},\mathbf{s})$ that satisfies the power flow equations in \eqref{eq:fixed_point_eq}.
\end{remark}

Similarly, equating both representations of $I_0$, and solving for $s_0^*$ yields
\begin{align}
s_0^* = -V_0Y_{00}V_0 - V_0\mathbf{Y}_{0L}\mathbf{V}.\label{eq:power_root_node}
\end{align} Thus, after using \eqref{eq:fixed_point_eq}, the power injection at the slack bus can be computed.

\subsection{OPF formulation}
In this paper, we consider a traditional AC-OPF-like formulation defined by the following equations and inequalities:
\begin{subequations}\label{eq:OPF-problem}
\begin{align}
&\textrm{Power flow equations in}  \ \eqref{eq:fixed_point_eq},\label{eq:OPF-con-pfe}\\
& \underline{V_j}\leq |V_j|\leq \overline{V_j},\quad j\in\mathcal{N},\label{eq:OPF-volt-con} \\
& s_j\in\mathcal{S}_j,\quad j\in\mathcal{N}. \label{eq:OPF-load-con}
\end{align}
\end{subequations}


In addition to the power flow equations in \eqref{eq:OPF-con-pfe}, inequalities \eqref{eq:OPF-volt-con} provide the voltage drop constraints. The inequalities in \eqref{eq:OPF-volt-con} imply that the magnitude of any voltage $V_j$ falls between the lower bound $\underline{V_j}$ and the upper bound $\overline{V_j}$.  Last, Equation \eqref{eq:OPF-load-con} restricts the complex powers to some admissible set. 

More constraints might be enforced in \eqref{eq:OPF-problem} regarding other physical limitations. We refer to \cite{Stott2012} for a review of OPF requirements in real-life grids.

In what follows, we restrict the feasible region of 
\eqref{eq:OPF-problem} to another set of equations and inequalities for which we can guarantee a solution to the power flow equations in \eqref{eq:OPF-con-pfe}. To do so, we introduce a subset of all voltages $V_j$ that satisfy the inequalities \eqref{eq:OPF-volt-con} and reformulate the equations \eqref{eq:OPF-load-con}.

Inequalities \eqref{eq:OPF-volt-con} assure that the voltage magnitudes are within a certain range. We allow a $\Delta\%$ voltage deviation from the nominal voltage $\hat{V}_0$, denoted by pre-specified bounds $(1-\Delta)|\hat{V}_0|$ and $(1+\Delta)|\hat{V}_0|$. The choice of $\Delta$ can adhere to safe operating regimes of appliances or law \cite{Gan2015b}. 

In the complex plane, inequalities \eqref{eq:OPF-volt-con} define a disk. This implies that, for a feasible voltage, the vector $V_j$ must have its endpoint in the disk. However, this does not necessarily mean that the voltage angle differences are close to each other. In distribution networks, it is known that the voltage angles differences are small \cite{Giraldo2022, Carvalho2015b}.

To this end, we assume that we have the knowledge of a pair $(\hat{\mathbf{V}},\hat{\mathbf{s}})$ that satisfies the power flow equations in \eqref{eq:fixed_point_eq} and that the voltage angles differences between $\mathbf{V}$ and $\hat{\mathbf{V}}$ are small. This means that, for $\Delta\in[0,1)$, we define the set of $\Delta$-stable voltage vectors as all  vectors that satisfy the constraint $|V_k-\hat{V}_j|\leq \Delta |\hat{V}_j|$ for all $k\in\mathcal{N}\backslash\{0\}$, i.e.,
\begin{align}
    D:=\{\mathbf{V}\in\mathbb{C}^N: \|\mathbf{V}-\hat{\mathbf{V}}\|_{\infty} \leq \Delta |\hat{\mathbf{V}}| \}.\label{eq:D}
\end{align}

The set of $\Delta$-stable voltage vectors $D$ can also be represented in the complex plane. 
For all voltages $V_j$ in the set $D$, it means that the endpoint of the vector $V_j$ is contained in the ball centered around the endpoint of the vector $\hat{V}_j$ with radius $\Delta|\hat{V}_j|$. 
Notice that every voltage $V_j$ that is in the set $D$ satisfies the inequalities in \eqref{eq:OPF-volt-con}, i.e., we have a subset of voltages that satisfy the inequalities in terms of length, see inequalities \eqref{eq:OPF-volt-con}, but satisfy extra constraints in terms of direction. We make this more rigorous in Lemmas \ref{lemma:ineq_voltage_magnitude} and \ref{lemma:ineq_voltage_angles}.



Having restricted the inequalities in \eqref{eq:OPF-volt-con}, we now turn our attention to the equations in \eqref{eq:OPF-load-con}. Constraints for power consumption arise from physical properties of appliances and constraints for power generation arise from renewable energy resource capacity. For example, if $s_j$ represents a solar panel with generation capacity $\overline{p}_j$ and nominal capacity $\overline{s}_j$, then $\mathcal{S}_j = \{s\in\mathbb{C}\ |\  -\overline{p}_j \leq \text{Re}(s)\leq 0, |s|\leq \overline{s}_j \}$, or if $s_j$ represents a controllable load with constant power factor $\eta$, whose real power consumption can very from $\underline{p}_j$ to $\overline{p}_j$, then $\mathcal{S}_j = \{s\in\mathbb{C}\ |\  \underline{p}_j\leq \text{Re}(s)\leq \overline{p}_j, \text{Im}(s) = (\sqrt{1-\eta^2}\text{Re}(s))/\eta\}$, \cite{Gan2015b,Low2014}.

To reformulate the equations in \eqref{eq:OPF-load-con}, we split the variable $s_j$ into consumption $s_j^c$ and generation $s_j^g$ explicitly, such that for all $j\in\mathcal{N}$,
\begin{align*}
    &s_j = s_j^c-s_j^g, \quad s_j^c = p_j^c+\mathrm{i}q_j^c, \quad s_j^g = p_j^g+\mathrm{i}q_j^g, \\ &p_j^c,p_j^g,q_j^c,q_j^g\geq 0.
\end{align*} Then, using the subset of inequalities in \eqref{eq:OPF-volt-con} and splitting of the variables $s_j$ in consumption and generation, we can formulate the restriction of \eqref{eq:OPF-problem} as
\begin{subequations}\label{eq:OPF-adjusted-problem}
\begin{align}
&\textrm{Power flow equations in}  \ \eqref{eq:fixed_point_eq},\label{eq:OPF-con-pfe-new}\\
& |V_j-\hat{V}_j|\leq \Delta|\hat{V}_j|,\label{eq:OPF-volt-con-new} \\
& p_j^c\in\mathcal{P}^c_j, q_j^c\in\mathcal{Q}_j^c, p_j^g\in\mathcal{P}_j^g, q_j^g\in\mathcal{Q}_j^g \quad j\in\mathcal{N}, \label{eq:OPF-load-con-new}\\
& p_j^c,p_j^g,q_j^c,q_j^g\geq 0,\quad j\in\mathcal{N}. \label{eq:OPF-load-positive-new}
\end{align}
\end{subequations}

We define the feasibility region of \eqref{eq:OPF-adjusted-problem} as
\begin{multline}
    S:=\{\tilde{\mathbf{s}}\in \mathbb{R}_+^{4N}: \exists \mathbf{V}\in D\ \text{that satisfies}\  G(\mathbf{V}) = \mathbf{V}\ \text{and has}\ \\ s_j\in\mathcal{S}_j\ \text{for all}\ j\in\mathcal{N}\},\label{eq:feasibility_region_S}
\end{multline} where $\tilde{\mathbf{s}} = (\mathbf{p}^c,\mathbf{q}^c,\mathbf{p}^g,\mathbf{q}^g)^T$. This is a natural definition: the existence of a voltage $\mathbf{V}\in D$ implies the inequalities in \eqref{eq:OPF-volt-con-new}, the equality $G(\mathbf{V}) = \mathbf{V}$ ensures the satisfaction of \eqref{eq:OPF-con-pfe-new}, while the equations $s_j\in\mathcal{S}_j,\ j\in\mathcal{N}$ are reformulated in \eqref{eq:OPF-load-con-new}--\eqref{eq:OPF-load-positive-new}.

Before we state the main results in Section \ref{sec:main_results}, we give an overview of the setting. We assume knowledge of a load vector $\hat{\mathbf{s}}$ and its corresponding voltage $\hat{\mathbf{V}}$ in the feasibility region $S$. 
The conditions of the polyhedral restriction we develop are formulated in terms of $(\hat{\mathbf{V}}, \hat{\mathbf{s}})$ and a given $\mathbf{s}$ such that $s_j\in\mathcal{S}_j$ for all $j\in\mathcal{N}$, and are used to guarantee the existence of at least one solution $\mathbf{V}$ to \eqref{eq:OPF-con-pfe-new} which is ``close'' to $\hat{\mathbf{V}}$ (cf. the set of $\Delta$-stable voltage vectors in \eqref{eq:D}). This setting is especially relevant in situations where the operational constraints \eqref{eq:OPF-load-con-new}--\eqref{eq:OPF-load-positive-new}, typically redundant, give way to the more stringent constraints imposed by the power flow equations \eqref{eq:OPF-con-pfe-new}.

\section{Main results}\label{sec:main_results}
In this section, we present two main results. The first main result is the construction of a polyhedral restriction of the feasibility region $S$, for which the power flow equations in \eqref{eq:OPF-con-pfe-new} have at least one solution which satisfies all operational constraints \eqref{eq:OPF-volt-con-new}--\eqref{eq:OPF-load-positive-new}. Mathematically, the polyhedral restriction represents sufficient conditions for the existence of at least one feasible solution that can be quickly checked or even more importantly, enforced in optimization problems. Consequently, the second main result leverages these polyhedral restrictions to develop a sequential optimization approach to obtain (bounds on) solutions to OPF problems.


The first main result is a polyhedral restriction of \eqref{eq:feasibility_region_S}.

\begin{theorem}\label{thm:polyhedral_restriction}
Denote $\hat{V}_{\text{min}} := \min_j |\hat{V}_j|$. The set
\begin{multline}
    P:=\bigg\{\tilde{\mathbf{s}}\in\mathbb{R}_+^{4N}:\quad (\mathbf{A}+\Delta\mathbf{B})\tilde{\mathbf{s}} \leq
\Delta(1-\Delta)^2\hat{V}_{\text{min}}^3\mathbf{1}^{4N} + \\ + (\mathbf{A}-\Delta(\mathbf{B}+(1-\Delta)\mathbf{C}))\tilde{\hat{\mathbf{s}}}\bigg\}\label{eq:restriction_P}
\end{multline} is a polyhedral restriction of $S$, where $\mathbf{A},\mathbf{B},\mathbf{C}\in\mathbb{R}^{4N\times 4N}$ are given by
\begin{align}
\mathbf{A} = \begin{pmatrix}
-(\mathbf{R}+\mathbf{X}) & -(-\mathbf{R}+\mathbf{X}) & \mathbf{R}+\mathbf{X} & -\mathbf{R}+\mathbf{X} \\
-(-\mathbf{R}+\mathbf{X}) & -(-\mathbf{R}-\mathbf{X}) & -\mathbf{R}+\mathbf{X} & -\mathbf{R}-\mathbf{X} \\
-(\mathbf{R}-\mathbf{X}) & -(\mathbf{R}+\mathbf{X}) & \mathbf{R}-\mathbf{X} & \mathbf{R}+\mathbf{X} \\
-(-\mathbf{R}-\mathbf{X}) & -(\mathbf{R}-\mathbf{X}) & -\mathbf{R}-\mathbf{X} & \mathbf{R}-\mathbf{X}
\end{pmatrix},\label{eq:A}
\end{align}
\begin{align}
    \mathbf{B} = \mathbf{J}_{4}\otimes (\mathbf{R}+\mathbf{X}),~ \text{and}~ \mathbf{C} = \mathbf{J}_{4}\otimes |\mathbf{Z}|,\label{eq:C}
\end{align} where $\mathbf{J}_{4}$ is a $(4\times 4)$- all-ones matrix and $\otimes$ denotes the Kronecker-product.
\end{theorem}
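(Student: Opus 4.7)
The plan is to invoke Brouwer's fixed-point theorem on the map $G$ restricted to the compact convex set $D$. Because $D$ is a Cartesian product of closed disks around the points $\hat{V}_j$ and $|V_k|\geq (1-\Delta)|\hat{V}_k|>0$ on $D$, the map $G$ is well-defined and continuous there. Hence it suffices to show that, under the hypothesis $\tilde{\mathbf{s}}\in P$, we have $G(D)\subseteq D$: Brouwer then supplies a fixed point $\mathbf{V}\in D$ with $G(\mathbf{V})=\mathbf{V}$, and this delivers $\tilde{\mathbf{s}}\in S$.

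To verify the self-inclusion, I would subtract $\hat{\mathbf{V}}=V_0\mathbf{1}^N-\mathbf{Z}\text{diag}(\hat{\mathbf{V}}^*)^{-1}\hat{\mathbf{s}}^*$ from $G(\mathbf{V})$ to get
\begin{align*}
G(\mathbf{V})-\hat{\mathbf{V}} = \mathbf{Z}\bigl[\text{diag}(\hat{\mathbf{V}}^*)^{-1}\hat{\mathbf{s}}^*-\text{diag}(\mathbf{V}^*)^{-1}\mathbf{s}^*\bigr],
\end{align*}
and split each component via the identity $\tfrac{\hat{s}_k^*}{\hat{V}_k^*}-\tfrac{s_k^*}{V_k^*}=\tfrac{\hat{s}_k^*-s_k^*}{\hat{V}_k^*}+s_k^*\cdot\tfrac{V_k^*-\hat{V}_k^*}{\hat{V}_k^*V_k^*}$ into a \emph{load-perturbation} piece (linear in $\mathbf{s}-\hat{\mathbf{s}}$) and a \emph{voltage-perturbation} piece (linear in $\mathbf{V}-\hat{\mathbf{V}}$). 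To convert the disk bound $|G(\mathbf{V})_j-\hat{V}_j|\leq \Delta|\hat{V}_j|$ into affine constraints on $\tilde{\mathbf{s}}$, I would replace it by the stronger inscribed-square condition $|\text{Re}(G_j-\hat{V}_j)|+|\text{Im}(G_j-\hat{V}_j)|\leq \Delta\hat{V}_{\text{min}}$, whose four sign combinations $\pm\text{Re}\pm\text{Im}\leq \Delta\hat{V}_{\text{min}}$ become four linear sufficient conditions per node.

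For the load-perturbation piece, the identities $\text{Re}(\mathbf{Z}\mathbf{s}^*)=\mathbf{R}\mathbf{p}+\mathbf{X}\mathbf{q}$ and $\text{Im}(\mathbf{Z}\mathbf{s}^*)=\mathbf{X}\mathbf{p}-\mathbf{R}\mathbf{q}$, combined with $\mathbf{p}=\mathbf{p}^c-\mathbf{p}^g$ and $\mathbf{q}=\mathbf{q}^c-\mathbf{q}^g$, produce across the four sign choices exactly the four block-rows of $\mathbf{A}$ acting on $\tilde{\mathbf{s}}-\tilde{\hat{\mathbf{s}}}$ (after lower-bounding $|\hat{V}_k|\geq \hat{V}_{\text{min}}$ to strip the complex denominator). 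For the voltage-perturbation piece, I would use the $D$-bounds $|V_k^*-\hat{V}_k^*|\leq \Delta|\hat{V}_k|$ and $|V_k^*|\geq (1-\Delta)\hat{V}_{\text{min}}$ together with the triangle bound $|s_k|\leq p_k^c+p_k^g+q_k^c+q_k^g$. Summation then yields the Kronecker structure $\mathbf{J}_4\otimes(\mathbf{R}+\mathbf{X})=\mathbf{B}$ (from the uniform $\ell_1$-in-$\tilde{\mathbf{s}}$ bound on $|s_k|$) and $\mathbf{J}_4\otimes|\mathbf{Z}|=\mathbf{C}$ (from passing to magnitudes in the complex denominator $\hat{V}_k^*V_k^*$). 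Splitting $|s_k|\leq |\hat{s}_k|+|s_k-\hat{s}_k|$ and rebounding the second summand via the same trick produces the combined contribution $\Delta\mathbf{B}(\tilde{\mathbf{s}}+\tilde{\hat{\mathbf{s}}})+\Delta(1-\Delta)\mathbf{C}\tilde{\hat{\mathbf{s}}}$. Clearing denominators by multiplying through by $(1-\Delta)^2\hat{V}_{\text{min}}^2$ and using $|\hat{V}_j|\geq \hat{V}_{\text{min}}$ on the left converts the right-hand side into $\Delta(1-\Delta)^2\hat{V}_{\text{min}}^3\mathbf{1}^{4N}$, which is exactly the polyhedral inequality defining $P$.

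The main obstacle will be the meticulous bookkeeping of constants: tracking $(1-\Delta)$ and $\hat{V}_{\text{min}}$ through multiple denominators and ensuring that the cubic factor $(1-\Delta)^2\hat{V}_{\text{min}}^3$ materialises with the correct coefficient, while the sign combinations reproduce the precise $(\pm\mathbf{R}\pm\mathbf{X})$ pattern of $\mathbf{A}$. A secondary subtlety lies in the fact that $1/\hat{V}_k^*$ is complex, so its phase cannot be absorbed into an affine function of $\tilde{\mathbf{s}}$; this forces the use of the magnitude lower bound $|\hat{V}_k^*|\geq \hat{V}_{\text{min}}$, which is in turn what makes the $|\mathbf{Z}|$-based matrix $\mathbf{C}$ appear alongside the sharper $\mathbf{A}$-block and explains why the restriction is strictly tighter than a naive $|\mathbf{Z}|$-bound on the full discrepancy would give.
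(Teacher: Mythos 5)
Your overall architecture matches the paper's: continuity plus a self-map property of $G$ on the compact set $D$, Brouwer's fixed-point theorem, the bound $|\mathrm{Re}|+|\mathrm{Im}|\leq\Delta\hat V_{\min}$ with its four sign combinations generating the block rows of $\mathbf A$, and a split of $G(\mathbf V)-\hat{\mathbf V}$ into a load-perturbation and a voltage-perturbation piece. However, your split is transposed relative to the paper's, and this creates a genuine gap. You place the load difference over the \emph{fixed} denominator $\hat V_k^*$ and claim that lower-bounding $|\hat V_k|\geq\hat V_{\min}$ ``strips the complex denominator'' and yields exactly the $\mathbf A$-block acting on $\tilde{\mathbf s}-\tilde{\hat{\mathbf s}}$. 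It does not: $1/\hat V_k^*=e^{\mathrm i\hat\theta_k}/|\hat V_k|$, so $\mathrm{Re}\bigl(Z_{jk}(\hat s_k^*-s_k^*)/\hat V_k^*\bigr)=\bigl[(R_{jk}\delta p_k+X_{jk}\delta q_k)\cos\hat\theta_k-(X_{jk}\delta p_k-R_{jk}\delta q_k)\sin\hat\theta_k\bigr]/|\hat V_k|$ with $\delta p_k=\hat p_k-p_k$. The base phase mixes the $R$- and $X$-weighted combinations of the sign-indefinite quantities $\delta p_k,\delta q_k$, and a magnitude bound on the denominator says nothing about its argument; you cannot discard the $\sin\hat\theta_k$ cross-terms, and if you instead pass to moduli you destroy the sign structure and obtain only a $|\mathbf Z|$-type bound on the whole piece, i.e., a different and far more conservative polyhedron than $P$.

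The paper avoids this precisely by keeping the load difference over the \emph{perturbed} denominator $V_k^*$, writing $1/V_k^*=V_k/|V_k|^2$ and invoking Lemma \ref{lemma:ineq_voltage_angles}: the factor $|V_k|\cos(\theta_k-\hat\theta_\cdot)$ is pinned to $[(1-\Delta)|\hat V_k|,(1+\Delta)|\hat V_k|]$ and $|V_k|\sin(\theta_k-\hat\theta_\cdot)$ to $[-\Delta|\hat V_k|,\Delta|\hat V_k|]$, so the leading term is $\mathbf A(\tilde{\mathbf s}-\tilde{\hat{\mathbf s}})$ and the phase-induced cross-terms are of order $\Delta$ and get absorbed into $\Delta\mathbf B$. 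This angle control is exactly the ingredient your route forecloses, since the phase of $\hat V_k^*$ is not small and is not controlled by $\Delta$; your version would only go through in the flat-start case $\hat{\mathbf V}=V_0\mathbf 1^N$ of Corollary \ref{cor:polyhedral_restriction}, where all $\hat\theta_k$ coincide. A secondary, related mismatch: because your voltage-perturbation piece carries $s_k^*$ rather than $\hat s_k^*$, the $|\mathbf Z|$-matrix it generates would have to multiply $\tilde{\mathbf s}$ as well as $\tilde{\hat{\mathbf s}}$; in the paper that piece carries $\hat s_k^*$, which is why $\mathbf C$ appears only against $\tilde{\hat{\mathbf s}}$ on the right-hand side of \eqref{eq:restriction_P}.
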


The proof of Theorem \ref{thm:polyhedral_restriction} is structured along the same lines as the proof of \cite[Theorem 1]{Wang2018}, and makes use of the following lemmas. First, in Lemma \ref{lemma:self-mapping}, we show that for $\tilde{\mathbf{s}}\in P$, the operator $G$ is a self-map on the metric space $D$. To show that $G$ is a self-map, we use inequalities derived in Lemmas \ref{lemma:ineq_voltage_magnitude} and \ref{lemma:ineq_voltage_angles}. Last, in Lemma \ref{lemma:continuous}, we show that $G$ is continuous on $D$. We then apply Brouwer’s fixed-point theorem to conclude that for $\tilde{\mathbf{s}}\in P$, there exists a voltage vector $\mathbf{V}$ that is $\Delta$-stable and satisfies the power flow equations \eqref{eq:OPF-con-pfe-new}. We now formally state these lemmas.

\begin{lemma}\label{lemma:self-mapping}
Let $\tilde{\mathbf{s}}\in P$ and $\mathbf{V}\in D$, then $G$ is a self-map on $D$.
\end{lemma}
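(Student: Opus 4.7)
The plan is to verify that every $\mathbf{V}\in D$ has its image $G(\mathbf{V})$ again in $D$, i.e.\ $|G(\mathbf{V})_j - \hat{V}_j| \leq \Delta |\hat{V}_j|$ for every $j\in\mathcal{N}\setminus\{0\}$. Since $(\hat{\mathbf{V}},\hat{\mathbf{s}})$ satisfies \eqref{eq:fixed_point_eq}, we have $\hat{\mathbf{V}} = G(\hat{\mathbf{V}})$, and subtracting the two representations of the fixed-point operator yields the identity
\begin{align*}
G(\mathbf{V})_j - \hat{V}_j = -\sum_k Z_{jk}\Bigl(\frac{s_k^*}{V_k^*} - \frac{\hat{s}_k^*}{\hat{V}_k^*}\Bigr).
\end{align*}
The starting manipulation is an add-and-subtract decomposition
\begin{align*}
\frac{s_k^*}{V_k^*} - \frac{\hat{s}_k^*}{\hat{V}_k^*} = \frac{s_k^* - \hat{s}_k^*}{\hat{V}_k^*} + \frac{s_k^*(\hat{V}_k^* - V_k^*)}{V_k^* \hat{V}_k^*},
\end{align*}
where the first term is linear in the load perturbation $\mathbf{s} - \hat{\mathbf{s}}$ and will produce the principal $\mathbf{A}$-contribution after a real/imaginary expansion, while the second is a nonlinear correction controlled by the voltage perturbation $\hat{\mathbf{V}} - \mathbf{V}$ and will produce the $\Delta$-scaled terms.

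Next, I bound $|G(\mathbf{V})_j - \hat{V}_j|$ by $|\mathrm{Re}(\cdot)| + |\mathrm{Im}(\cdot)|$, expand $Z_{jk} = R_{jk}+\mathrm{i}X_{jk}$ and $s_k^* = (p_k^c - p_k^g) - \mathrm{i}(q_k^c - q_k^g)$, and apply the triangle inequality. The four sign patterns arising from $\pm\mathrm{Re}(\cdot)$ and $\pm\mathrm{Im}(\cdot)$ produce combinations $\pm R_{jk}\pm X_{jk}$ acting on each of the four load components $p_k^c, q_k^c, p_k^g, q_k^g$, which is exactly the four block rows of $\mathbf{A}$ in \eqref{eq:A} applied to the stacked vector $\tilde{\mathbf{s}}$. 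To convert the resulting $1/|V_k^*|$ and $1/(|V_k^*||\hat{V}_k^*|)$ factors into scalar constants, I invoke Lemma \ref{lemma:ineq_voltage_magnitude} to lower bound $|V_k|\geq (1-\Delta)|\hat{V}_k|\geq(1-\Delta)\hat{V}_{\text{min}}$, and Lemma \ref{lemma:ineq_voltage_angles} to control the phase factor $(\hat{V}_k^* - V_k^*)/\hat{V}_k^*$ by its modulus $\leq \Delta$. Since the nonlinear correction does not split cleanly into the real and imaginary parts of $Z_{jk}$, it must be bounded using $|Z_{jk}|$, which is the origin of $\mathbf{C} = \mathbf{J}_4\otimes|\mathbf{Z}|$ in the definition of $P$.

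After clearing the denominator $|V_k||\hat{V}_k|\geq(1-\Delta)\hat{V}_{\text{min}}^2$ throughout and pulling $|\hat{V}_j|\geq\hat{V}_{\text{min}}$ out of the right-hand side $\Delta|\hat{V}_j|$, the target bound rearranges into precisely $(\mathbf{A}+\Delta\mathbf{B})\tilde{\mathbf{s}}\leq\Delta(1-\Delta)^2\hat{V}_{\text{min}}^3\mathbf{1}^{4N}+(\mathbf{A}-\Delta(\mathbf{B}+(1-\Delta)\mathbf{C}))\tilde{\hat{\mathbf{s}}}$, which is the defining inequality of $P$. The main obstacle is purely bookkeeping: verifying that the four sign patterns align with the four block rows of $\mathbf{A}$, that the factor $|V_k|\geq(1-\Delta)|\hat{V}_k|$ in the denominator of the nonlinear correction generates exactly the $\Delta\mathbf{B}$ coefficient on $\tilde{\mathbf{s}}$ together with the $-\Delta(\mathbf{B}+(1-\Delta)\mathbf{C})$ coefficient on $\tilde{\hat{\mathbf{s}}}$, and that the cubic constant $\Delta(1-\Delta)^2\hat{V}_{\text{min}}^3$ drops out of the denominator-clearing worst case. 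Once this alignment is checked, membership $\tilde{\mathbf{s}}\in P$ directly implies $|G(\mathbf{V})_j-\hat{V}_j|\leq\Delta|\hat{V}_j|$, establishing that $G$ is a self-map on $D$.
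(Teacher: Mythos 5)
Your overall strategy coincides with the paper's: both start from $\hat{\mathbf{V}}=G(\hat{\mathbf{V}})$, split $G(\mathbf{V})-\hat{\mathbf{V}}$ into a term linear in the load perturbation plus a correction controlled by the voltage perturbation, expand into real and imaginary parts to recover the sign pattern of $\mathbf{A}$, and invoke Lemmas \ref{lemma:ineq_voltage_magnitude} and \ref{lemma:ineq_voltage_angles} together with $|\mathbf{Z}|$ for the part that does not split into $\mathbf{R}$ and $\mathbf{X}$. However, your particular add-and-subtract is transposed relative to the paper's, and this is not cosmetic. The paper writes the difference as $\mathbf{Z}\bigl(\mathrm{diag}(\hat{\mathbf{V}}^*)^{-1}-\mathrm{diag}(\mathbf{V}^*)^{-1}\bigr)\hat{\mathbf{s}}^*+\mathbf{Z}\,\mathrm{diag}(\mathbf{V}^*)^{-1}(\hat{\mathbf{s}}^*-\mathbf{s}^*)$: the voltage-perturbation correction carries the \emph{fixed} load $\hat{\mathbf{s}}$, and the load difference sits over $V_k^*$. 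In your version the correction carries the \emph{variable} load $s_k^*$, so after passing to moduli you obtain a contribution of the form $\Delta\,\mathbf{C}$ (up to constants) acting on $\tilde{\mathbf{s}}$. But the polyhedron $P$ of Theorem \ref{thm:polyhedral_restriction} contains no $\mathbf{C}\tilde{\mathbf{s}}$ term on its left-hand side --- $\mathbf{C}$ multiplies only $\tilde{\hat{\mathbf{s}}}$ --- and since $\mathbf{C}$ has nonnegative entries this contribution cannot simply be dropped. Membership $\tilde{\mathbf{s}}\in P$ therefore does not control your bound, and the claimed rearrangement ``into precisely the defining inequality of $P$'' does not go through.

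A second, related issue: placing the load difference over $\hat{V}_k^*$ means that the real/imaginary expansion picks up the exact phases $\exp(\mathrm{i}\hat{\theta}_k)$ of the reference voltages, not the quantities $|V_k|\cos(\theta_k-\hat{\theta}_j)$ and $|V_k|\sin(\theta_k-\hat{\theta}_j)$ that Lemma \ref{lemma:ineq_voltage_angles} bounds. It is precisely the division by $V_k^*$ (followed by multiplication with $\exp(-\mathrm{i}\hat{\theta}_j)$) that lets the paper replace the cosine by $(1\pm\Delta)$ and the sine by $\pm\Delta$ and hence land on the signed $\pm\mathbf{R}\pm\mathbf{X}$ blocks of $\mathbf{A}$; with your denominator the resulting matrix would depend on the $\hat{\theta}_k$ and would not equal $\mathbf{A}$ except when all reference angles coincide (e.g.\ the special case $\hat{\mathbf{V}}=V_0\mathbf{1}^N$ of Corollary \ref{cor:polyhedral_restriction}). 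Both problems disappear if you adopt the paper's decomposition; the remaining work --- the case distinction on $|\alpha_1|+|\alpha_2|$, the replacement of $|p_k|$ by $p_k^c+p_k^g$ (which needs the complementarity argument of Remark \ref{remark:absolute_values}), and the matching of the resulting coefficients with $\mathbf{A}+\Delta\mathbf{B}$ on $\tilde{\mathbf{s}}-\tilde{\hat{\mathbf{s}}}$ and $2\Delta\mathbf{B}+\Delta(1-\Delta)\mathbf{C}$ on $\tilde{\hat{\mathbf{s}}}$ --- is exactly the bookkeeping you deferred, and it is where the substance of the lemma lies.
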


\begin{lemma}\label{lemma:ineq_voltage_magnitude}
Let $\mathbf{V}\in D$. Then, the magnitude of all voltages at all nodes $j\in\mathcal{N}$ are bounded as follows:
\begin{align*}
        (1-\Delta)|\hat{V}_j|\leq |V_j| \leq (1+\Delta)|\hat{V}_j|.
\end{align*}
\end{lemma}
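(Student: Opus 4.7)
The plan is to derive both inequalities directly from the membership $\mathbf{V} \in D$ together with the (reverse) triangle inequality, handling the slack bus $j=0$ separately as a trivial case.

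First, I would recall the definition of $D$ given in \eqref{eq:D}: every $\mathbf{V} \in D$ satisfies, componentwise, $|V_j - \hat{V}_j| \leq \Delta |\hat{V}_j|$ for all $j \in \mathcal{N}\setminus\{0\}$. For the upper bound, I would write $V_j = \hat{V}_j + (V_j - \hat{V}_j)$ and apply the ordinary triangle inequality to obtain
\begin{equation*}
|V_j| \leq |\hat{V}_j| + |V_j - \hat{V}_j| \leq |\hat{V}_j| + \Delta |\hat{V}_j| = (1+\Delta)|\hat{V}_j|.
\end{equation*}
For the lower bound, I would symmetrically write $\hat{V}_j = V_j + (\hat{V}_j - V_j)$ and apply the reverse triangle inequality to get
\begin{equation*}
|V_j| \geq |\hat{V}_j| - |\hat{V}_j - V_j| \geq |\hat{V}_j| - \Delta|\hat{V}_j| = (1-\Delta)|\hat{V}_j|.
\end{equation*}

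For the slack bus $j=0$, the voltage is fixed and equals $\hat{V}_0$, so $|V_0| = |\hat{V}_0|$ and the claimed bounds hold trivially since $\Delta \in [0,1)$. Combining these observations yields the claim for every $j \in \mathcal{N}$.

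There is no real obstacle here; the statement is essentially a componentwise restatement of the defining inequality of $D$ viewed through the triangle inequality. The only point worth being careful about is the notational convention that $|\hat{\mathbf{V}}|$ denotes the elementwise magnitude, so the $\|\cdot\|_\infty$ bound in \eqref{eq:D} must be read as the componentwise inequality $|V_j - \hat{V}_j| \leq \Delta|\hat{V}_j|$ rather than as a single scalar constraint. Once this reading is fixed, the proof reduces to the two-line argument above.
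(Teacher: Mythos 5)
Your proof is correct and follows essentially the same route as the paper's: the triangle inequality for the upper bound and the reverse triangle inequality for the lower bound, applied to the componentwise constraint $|V_j-\hat{V}_j|\leq\Delta|\hat{V}_j|$ defining $D$. Your explicit handling of the slack bus $j=0$ and the remark on reading the $\|\cdot\|_{\infty}$ bound componentwise are minor additions not present in the paper's proof, but they do not change the argument.
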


\begin{lemma}\label{lemma:ineq_voltage_angles}
Let $\mathbf{V}\in D$. For all $j\in\mathcal{N}$, denote the voltages in their exponential form as $V_j = |V_j|\exp(\mathrm{i}\theta_j)$ and $\hat{V}_j = |\hat{V}_j|\exp(\mathrm{i}\hat{\theta}_j)$. Then, the voltage angles at all nodes $j\in\mathcal{N}$ are bounded as follows:
\begin{align}
    (1-\Delta)|\hat{V}_j| \leq &|V_j|\cos(\theta_j-\hat{\theta}_j) \leq (1+\Delta)|\hat{V}_j|, \label{eq:cos_bound} \\
    -\Delta|\hat{V}_j| \leq &|V_j|\sin(\theta_j-\hat{\theta}_j) \leq \Delta|\hat{V}_j|. \label{eq:sin_bound}
\end{align}
\end{lemma}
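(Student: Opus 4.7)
The plan is to exploit the rotational invariance of the complex modulus. Starting from the defining inequality of $D$, namely $|V_j - \hat{V}_j| \leq \Delta|\hat{V}_j|$, I would multiply the complex number $V_j - \hat{V}_j$ by the unit-modulus factor $\exp(-\mathrm{i}\hat{\theta}_j)$, which leaves the modulus unchanged but rotates the reference frame so that $\hat{V}_j$ becomes real and equal to $|\hat{V}_j|$.

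Concretely, I would write
\begin{align*}
(V_j - \hat{V}_j)\exp(-\mathrm{i}\hat{\theta}_j) &= |V_j|\exp(\mathrm{i}(\theta_j - \hat{\theta}_j)) - |\hat{V}_j| \\
&= \bigl(|V_j|\cos(\theta_j - \hat{\theta}_j) - |\hat{V}_j|\bigr) + \mathrm{i}\,|V_j|\sin(\theta_j - \hat{\theta}_j),
\end{align*}
so that taking moduli and using $|{\exp(-\mathrm{i}\hat{\theta}_j)}|=1$ yields
\[
\bigl(|V_j|\cos(\theta_j - \hat{\theta}_j) - |\hat{V}_j|\bigr)^2 + \bigl(|V_j|\sin(\theta_j - \hat{\theta}_j)\bigr)^2 \leq \Delta^2|\hat{V}_j|^2.
\]

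From here the conclusion is immediate: since each squared summand on the left is bounded by the right-hand side, I read off
\[
\bigl||V_j|\sin(\theta_j - \hat{\theta}_j)\bigr| \leq \Delta|\hat{V}_j|, \qquad \bigl||V_j|\cos(\theta_j - \hat{\theta}_j) - |\hat{V}_j|\bigr| \leq \Delta|\hat{V}_j|,
\]
which are exactly the two claimed bounds in \eqref{eq:sin_bound} and \eqref{eq:cos_bound} (after rearranging the second one).

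There is really no hard step here; the only subtlety is recognizing that the single complex inequality $|V_j - \hat{V}_j| \leq \Delta|\hat{V}_j|$ simultaneously controls both the cosine and sine components once one changes to the frame aligned with $\hat{V}_j$. I would note in passing that Lemma \ref{lemma:ineq_voltage_magnitude} follows as a byproduct via the reverse triangle inequality $\bigl||V_j| - |\hat{V}_j|\bigr| \leq |V_j - \hat{V}_j|$, so the two lemmas share the same underlying geometric picture of a disk of radius $\Delta|\hat{V}_j|$ centered at $\hat{V}_j$.
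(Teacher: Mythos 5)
Your proof is correct, and it takes a cleaner route than the paper's. The paper splits the two bounds into separate arguments: the cosine bound is obtained from the law of cosines, $|V_j-\hat{V}_j|^2 = |V_j|^2+|\hat{V}_j|^2-2|V_j||\hat{V}_j|\cos(\theta_j-\hat{\theta}_j)$, where the lower bound requires an algebraic rearrangement plus an invocation of Lemma \ref{lemma:ineq_voltage_magnitude} and the upper bound uses $\cos(\cdot)\leq 1$ together with Lemma \ref{lemma:ineq_voltage_magnitude} again; the sine bound is then derived separately via scalar projection/rejection and the Pythagorean theorem. Your single rotation by $\exp(-\mathrm{i}\hat{\theta}_j)$ packages both of these steps into the one identity
\begin{align*}
\bigl(|V_j|\cos(\theta_j - \hat{\theta}_j) - |\hat{V}_j|\bigr)^2 + \bigl(|V_j|\sin(\theta_j - \hat{\theta}_j)\bigr)^2 = |V_j-\hat{V}_j|^2 \leq \Delta^2|\hat{V}_j|^2,
\end{align*}
from which both \eqref{eq:cos_bound} and \eqref{eq:sin_bound} follow by dropping the other (non-negative) summand. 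This is the same underlying geometric picture --- the law-of-cosines expansion is exactly your left-hand side multiplied out, and the paper's projection/rejection decomposition is your real/imaginary split --- but your version needs no appeal to Lemma \ref{lemma:ineq_voltage_magnitude} at all, and, as you note, actually yields that lemma as a byproduct. The only thing the paper's longer cosine argument buys is a slightly sharper intermediate lower bound $\frac{|V_j|^2}{2|\hat{V}_j|}+\frac{(1-\Delta^2)|\hat{V}_j|}{2}$, which is not used elsewhere; for the stated lemma your argument is complete and preferable.
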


\begin{lemma}\label{lemma:continuous}
Let $\tilde{\mathbf{s}}\in P$ and $\mathbf{V}\in D$, then $G$ is a continuous operator on $D$.
\end{lemma}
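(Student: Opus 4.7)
The plan is to observe that the operator
\[
G(\mathbf{V}) = V_0\mathbf{1}^N - \mathbf{Z}\,\mathrm{diag}(\mathbf{V}^*)^{-1}\mathbf{s}^*
\]
is built out of operations that are each continuous wherever the inner inverse is well-defined: complex conjugation, formation of a diagonal matrix, matrix multiplication and vector addition are all continuous everywhere, so the entire continuity question reduces to whether the reciprocal map $V_j^* \mapsto 1/V_j^*$ is continuous at each point of $D$. Equivalently, I only need to rule out $V_j = 0$ uniformly on $D$.

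First I would invoke Lemma \ref{lemma:ineq_voltage_magnitude} to conclude that for every $\mathbf{V}\in D$ and every $j\in\mathcal{N}$,
\[
|V_j| \;\geq\; (1-\Delta)|\hat{V}_j| \;\geq\; (1-\Delta)\hat{V}_{\min} \;>\; 0,
\]
since $\Delta\in[0,1)$ and the nominal operating point $\hat{\mathbf{V}}$ has strictly positive magnitudes in every coordinate (being a feasible voltage satisfying the lower bound $|\hat{V}_j|\geq \underline{V_j}>0$). This strictly positive, uniform lower bound guarantees that $\mathrm{diag}(\mathbf{V}^*)$ is invertible everywhere on $D$, and that each $V_j^*$ stays inside the open set $\{z\in\mathbb{C}: |z|\geq (1-\Delta)\hat{V}_{\min}\}$ on which $z\mapsto 1/z$ is continuous.

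I would then write $G$ component-wise,
\[
[G(\mathbf{V})]_j \;=\; V_0 \;-\; \sum_{k=1}^{N} Z_{jk}\,\frac{s_k^*}{V_k^*},
\]
and observe that each summand is the product of a fixed complex constant $Z_{jk}s_k^*$ with a function of $\mathbf{V}$ that is continuous on $D$ by the previous paragraph. A finite sum of continuous complex-valued functions is continuous, so every component of $G$ is continuous on $D$, hence $G$ itself is continuous on $D$.

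The main (modest) obstacle is precisely the non-vanishing of the voltages uniformly on $D$; once Lemma \ref{lemma:ineq_voltage_magnitude} is invoked this becomes immediate, and the rest of the argument is routine from standard compositional properties of continuous maps. Note that the hypothesis $\tilde{\mathbf{s}}\in P$ is not actually used in the proof of continuity — only $\mathbf{V}\in D$ is needed — but it is of course retained in the statement because Lemma \ref{lemma:continuous} will be combined with Lemma \ref{lemma:self-mapping} (which does require $\tilde{\mathbf{s}}\in P$) in the subsequent application of Brouwer's fixed-point theorem.
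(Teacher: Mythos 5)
Your proof is correct and follows essentially the same route as the paper: both arguments reduce continuity of $G$ to the continuity of $x\mapsto 1/x^*$ away from the origin, using the lower bound $|V_j|\geq(1-\Delta)|\hat{V}_j|>0$ from Lemma \ref{lemma:ineq_voltage_magnitude} (which the paper invokes only implicitly via ``$\Delta<1$, so $|V_j|>0$''). Your observation that the hypothesis $\tilde{\mathbf{s}}\in P$ is not needed for this lemma is also accurate.
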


The proofs of Lemmas \ref{lemma:self-mapping}--\ref{lemma:continuous} are given in the appendix. 

\begin{proof}[Proof of Theorem \ref{thm:polyhedral_restriction}]
Let $\tilde{\mathbf{s}}\in P$ and $\mathbf{V}\in D$. It is well-known that $(\mathbb{C}^N,\ell^{\infty})$ is a Banach space. The disk $D$ is a compact subset of $\mathbb{C}^N$. Furthermore, from Lemmas \ref{lemma:self-mapping} and \ref{lemma:continuous}, we have that $G$ is a continuous self-map on $D$. Then, from Brouwer's fixed-point theorem, it follows that $G$ has a fixed-point $\mathbf{V}$, i.e., there exists a vector $\mathbf{V}\in D$ such that $G(\mathbf{V}) = \mathbf{V}$. This means that for all complex powers $\tilde{\mathbf{s}}\in P$ that can be represented by linear terms of consumption and generation of power, there exists a voltage vector $\mathbf{V}$ that is $\Delta$-stable and satisfies the power flow equations in \eqref{eq:OPF-con-pfe-new}.
\end{proof}

We allow that $(\hat{\mathbf{V}}, \hat{\mathbf{s}})$ includes the pair $(V_0\mathbf{1}^N, \mathbf{0})$, where there is no consumption nor injection in the grid. 
In this case, 
the result of Theorem \ref{thm:polyhedral_restriction} reduces to the following corollary.

\begin{corollary} \label{cor:polyhedral_restriction} Observe that if $(\hat{\mathbf{V}},\hat{\mathbf{s}}) = (V_0\mathbf{1}^N,\mathbf{0})$, then from Theorem \ref{thm:polyhedral_restriction}, we have that $P$ reduces to
\begin{align*}
    \bigg\{\tilde{\mathbf{s}}\in\mathbb{R}_+^{4N}:\quad (\mathbf{A}+\Delta\mathbf{B})\tilde{\mathbf{s}} \leq
\Delta(1-\Delta)^2|V_0|^3\mathbf{1}^{4N}\bigg\}.
\end{align*} 
\end{corollary}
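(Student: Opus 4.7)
The plan is a direct substitution into the expression for $P$ given in \eqref{eq:restriction_P}. First I would confirm that $(V_0\mathbf{1}^N, \mathbf{0})$ is a bona fide base point, i.e., that it satisfies the fixed-point equation \eqref{eq:fixed_point_eq} with no consumption or generation at any PQ bus. This is precisely what the remark following \eqref{eq:load_profile_no_consumption} establishes: when $\mathbf{s} = \mathbf{0}$, the corresponding voltage profile is $V_0\mathbf{1}^N$. In particular, this base point automatically lies in the disk $D$ of \eqref{eq:D}, since $\hat{\mathbf{V}}$ trivially sits at its own center, so all hypotheses of Theorem~\ref{thm:polyhedral_restriction} are in force.

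Next, I would evaluate the two quantities appearing on the right-hand side of \eqref{eq:restriction_P} that depend on $(\hat{\mathbf{V}}, \hat{\mathbf{s}})$. Since every entry of $\hat{\mathbf{V}}$ equals $V_0$, we have $\hat{V}_{\text{min}} = \min_j |\hat{V}_j| = |V_0|$, so the scalar constant in front of $\mathbf{1}^{4N}$ becomes $\Delta(1-\Delta)^2|V_0|^3$. Interpreting $\hat{\mathbf{s}} = \mathbf{0}$ in the natural sense of no consumption and no generation at any bus (i.e., $\hat{p}_j^c = \hat{q}_j^c = \hat{p}_j^g = \hat{q}_j^g = 0$ for all $j$), the augmented load vector satisfies $\tilde{\hat{\mathbf{s}}} = \mathbf{0}\in\mathbb{R}^{4N}$, and hence the entire linear term $(\mathbf{A}-\Delta(\mathbf{B}+(1-\Delta)\mathbf{C}))\tilde{\hat{\mathbf{s}}}$ vanishes, regardless of the particular structure of $\mathbf{A},\mathbf{B},\mathbf{C}$.

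Combining these two reductions in \eqref{eq:restriction_P} yields exactly the set stated in the corollary. There is no real obstacle here: the argument is essentially bookkeeping, made possible by the fact that the right-hand side of \eqref{eq:restriction_P} has already been organized as a constant term plus a term linear in $\tilde{\hat{\mathbf{s}}}$. The corollary is therefore an immediate specialization of Theorem~\ref{thm:polyhedral_restriction} to the ``empty-grid'' reference point.
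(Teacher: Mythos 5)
Your proposal is correct and matches the paper's intent: the corollary is stated as an immediate observation, obtained exactly as you describe by substituting $\hat{V}_{\text{min}} = |V_0|$ and $\tilde{\hat{\mathbf{s}}} = \mathbf{0}$ into \eqref{eq:restriction_P} so that the linear term vanishes. Your additional check that $(V_0\mathbf{1}^N,\mathbf{0})$ is a valid base point (via the remark following \eqref{eq:load_profile_no_consumption}) is a sensible, if implicit in the paper, piece of due diligence.
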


We have derived sufficient polyhedral conditions for AC power flow feasibility. Now, we provide the second main result: a sequential optimization method that solves OPF problems. The method is described in Algorithm \ref{alg:restrictions}.


Given a current feasible point $(\hat{\mathbf{V}}^{(k)},\hat{\mathbf{s}}^{(k)})$, each iteration of the algorithm (i) constructs the polyhedral restriction around the feasible point, (ii) solves an OPF problem subject to the current polyhedral constraints, and (iii) updates the new optimal (and feasible) points $(\hat{\mathbf{V}}^{(k+1)},\hat{\mathbf{s}}^{(k+1)})$, and the voltage drop parameter $\Delta$ to ensure feasibility, accordingly. The output of the algorithm is a sequence of variables $(\hat{\mathbf{V}}^{(k)},\hat{\mathbf{s}}^{(k)}), k=0,\ldots,K,$ that optimize the chosen objective function and satisfy the power flow equations and operational constraints.

\begin{algorithm}
\caption{Sequential optimization method for solving OPF with polyhedral restrictions.}
\begin{algorithmic}[1]
\STATE Initialize: Set $\mathbf{s}^{(0)}$ and $\mathbf{V}^{(0)}$ as initial feasible point, and $\Delta^{(0)}$ as desired voltage drop parameter.
\WHILE{$\|(f(\mathbf{s}^{(k+1)})-f(\mathbf{s}^{(k)}))/f(\mathbf{s}^{(k)})\|_2>\epsilon$}{}
	\STATE Construct polyhedral restriction using \eqref{eq:restriction_P}--\eqref{eq:C}.
    \STATE Solve \begin{align}
    \mathbf{s}^{(k+1)} = \argmin_{s\in P} f(\text{Re}(s)).\label{eq:different_objectives}
    \end{align}
    \STATE Solve power flow given $\mathbf{s}^{(k+1)}$ to obtain $\mathbf{V}^{(k+1)}$.
    \STATE Update $\Delta^{(k+1)} = \Delta^{(0)}-\|\mathbf{V}^{(k)}-\mathbf{V}^{(0)}\|_{\infty}$.
	\STATE $k:=k+1$.
\ENDWHILE
\STATE Return $\mathbf{s}^{(1)},\ldots,\mathbf{s}^{(K)}$.
\end{algorithmic}
\label{alg:restrictions}
\end{algorithm}

Depending on the problem setting, we might want to consider a variety of functions $f$ in \eqref{eq:different_objectives}. We provide an example in classical optimal power flow: given the current state of the network $(\hat{\mathbf{V}},\hat{\mathbf{s}})$, we want to find a lower-cost operating point $(\mathbf{V},\mathbf{s})$ while satisfying the power flow equations and operational constraints. In the formulation in \eqref{eq:different_objectives}, the OPF problem is solved by setting the objective to minimize the generation of active load in the system. 
The algorithm solves the OPF using the polyhedral restriction from Theorem \ref{thm:polyhedral_restriction} and iterates by setting the solution to the new feasible point.


\section{Numerical experiments}\label{sec:numerical_results}
This section conducts numerical experiments on different networks outlined in Section \ref{subsec:models}. It compares the polyhedral restriction and another approach that develops sufficient conditions for the existence of feasible power flow solutions in Section \ref{subsec:conservativeness}, and it shows the efficacy of Algorithm \ref{alg:restrictions} on the different test cases in Section \ref{subsec:poly_restriction}. 

\subsection{Distribution network models}\label{subsec:models}
We validate our algorithm with three distribution network test cases: the first two are small models (one and two load buses) demonstrating theoretical concepts, and the third models a real-life distribution network.

\paragraph{Two-node network} We consider a line network with $N=2$ nodes and line $\mathcal{E}=\{(0,1)\}$, with impedance $z_{01}$. Here, node $1$ has power consumption $s_1^c$ and generation $s_1^g$. 

\paragraph{Three-node network} We examine a line network with $N=3$ nodes and lines $\mathcal{E}=\{(0,1),(1,2)\}$, with impedances $z_{01}$ and $z_{12}$. The power consumption at node $j$ is $s_j^c$, the power generation is $s_j^g$ and the net load is $s_j=s_j^c-s_j^g$ for $j=1,2$. 

%
Here, we consider two different optimization problems: (i) maximize the active power consumption and (ii) minimize the active power generation in the network. For (i), the optimal solution coincides with the solutions provided by the SOCP relaxation and the polyhedral restriction. While, for (ii), the SOCP relaxation gives an infeasible solution and the polyhedral restriction provides a feasible solution, although not optimal. The optimal solution can, for such a small network, be computed using GloptiPoly \cite{Henrion2002}.

\paragraph{SCE-47 network}
This network is a model of a real-life distribution grid obtained from the Californian electricity supply company Southern California Edison. For details about the SCE-47 network, we refer to \cite{Gan2015b}.

For this network, we consider similar optimization problems as in the three-node network. Here, we see that all solutions obtained using the SOCP relaxation are infeasible, while the polyhedral restriction provides feasible solutions and bounds on the true optimal solutions.


\subsection{Comparison of different feasibility regions}\label{subsec:conservativeness}
In this section, we compare the feasibility regions as they emerge from the polyhedral restriction developed in this paper and a similar technique in \cite{Wang2018}. Specifically, observe that the proof of the sufficient conditions for the existence of feasible power flow solutions near a specific operating point in \cite[Theorem 1]{Wang2018} is similar to the proof of Theorem \ref{thm:polyhedral_restriction}. In what follows, we highlight the differences in these two proofs, the specific properties of both sets of sufficient conditions resulting from them, and visually compare these sets. 


In \cite{Wang2018}, the proof does not consider the complete feasibility region as in \eqref{eq:OPF-adjusted-problem}, but manage to formulate the power flow equations and the voltage drop constraint into a similar set constraints as in \eqref{eq:OPF-con-pfe-new}--\eqref{eq:OPF-volt-con-new}. On the contrary, in the proof of Theorem \ref{thm:polyhedral_restriction}, we consider the complete set of equations in \eqref{eq:OPF-adjusted-problem}, assuming that \eqref{eq:OPF-load-con-new}--\eqref{eq:OPF-load-positive-new} are redundant.

The authors in \cite{Wang2018} present explicit sufficient conditions ensuring the existence and uniqueness of solutions to the power flow equations. To do so, they reformulated the power-flow equations as a fixed-point equation that act as a contraction mapping on a complete metric space. They apply Banach's fixed-point theorem to conclude the existence of a unique fixed-point for this equation. In our case, while we guarantee the existence of the fixed-point by Brouwer's fixed-point theorem, we do not guarantee its uniqueness.

In \cite{Wang2018}, the authors identify sufficient conditions without leveraging them for a convex restriction. In contrast, we use our sufficient conditions to construct a polyhedral restriction.

To show the qualitative differences between the polyhedral restriction and the sufficient conditions in \cite{Wang2018}, we visually have a look at both regions. 
We consider the three-node network model in Section \ref{subsec:models} and analyze the feasibility region $S$ in \eqref{eq:feasibility_region_S}, the convex region $B$ that can be derived from the sufficient conditions in \cite{Wang2018}, and the polyhedral restriction $P$ in \eqref{eq:restriction_P}, all around the feasible point $(V_0\mathbf{1}^N,\mathbf{0})$. In what follows, the regions are visually distinguished by color. The feasibility region $S$ is represented in red, the convex restriction $B$ in yellow, and the polyhedral restriction $P$ is illustrated in blue. The overlap between the polyhedral restriction $P$ and the convex restriction is indicated by a green color.

We take reactive power and reactance into account. We set $V_0 = 1,\quad r_{01}=r_{12}= r = 0.01, \quad x_{01}=x_{12}= x = 0.001$, and $\Delta = 0.1$.
In this case, we set a non-negative power generation with power factor 0.9,  i.e. $p_1=-3, q_1 = -3\frac{\sqrt{1-0.9^2}}{0.9}$.
\begin{figure}[h!]
    \centering
    \begin{subfigure}{0.5\linewidth}
  		\centering
  		\includegraphics[width=\linewidth]{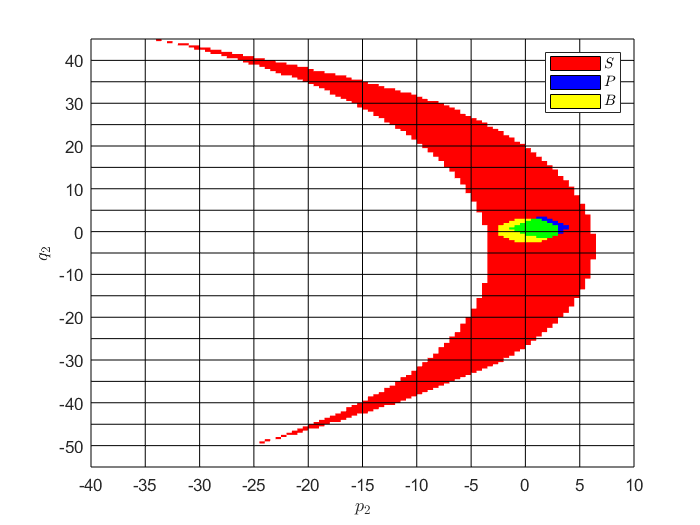}
  		\caption{Wide shot on different regions.}
  	\label{fig:inner_approximation_SP_6_plus_boudec}
	\end{subfigure}%
	\begin{subfigure}{.5\linewidth}
  		\centering
  		\includegraphics[width=\linewidth]{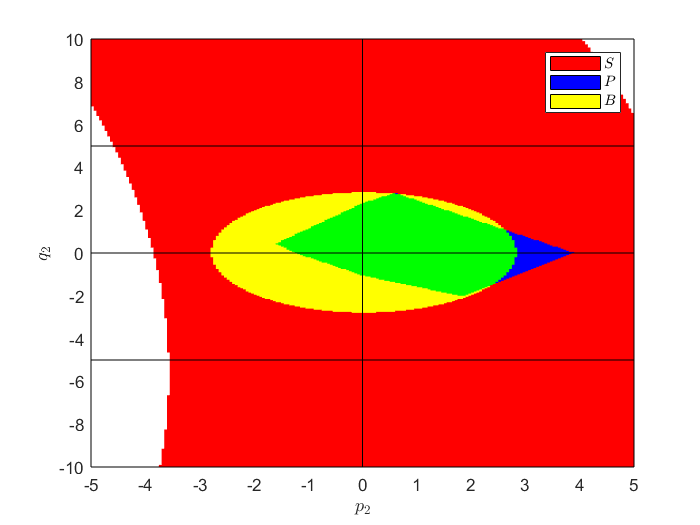}
  		\caption{Zoomed-in shot on regions.}
  		\label{fig:inner_approximation_SP_6_plus_boudec}
	\end{subfigure}
	\caption{Feasible region, polyhedral restriction P, and convex region for three-node model with $r=0.01$, $x=0.001$, $p_1 = -3, q_1 = -3\frac{\sqrt{1-0.9^2}}{0.9}$, and $\Delta = 0.1$.}
\end{figure} In this scenario, the polyhedral restriction $P$ is slightly shifted to the right of the convex region $B$, with overlapping and comparable sizes. In conclusion, the methods are comparable but qualitatively different. 

\subsection{Polyhedral restriction in optimization problems}\label{subsec:poly_restriction}

Here, we show the efficacy of Algorithm \ref{alg:restrictions} on the three networks described in Section \ref{subsec:models}. We start with a known initial solution, iteratively construct a restriction, solve the OPF with simplified affine constraints, and repeat the process until convergence, i.e., if the difference in objective values between sequentially found solutions is below some threshold. While optimal solutions are known for the first two networks, the optimal solution for the third network is unknown.

\subsubsection{Two-node example}
We study the two-node network 
analytically. It is known that, for this simple system, the SOCP relaxation is not always exact \cite{Kocuk2016}. Therefore, we construct an example where the SOCP relaxation is no longer exact by imposing stricter voltage drop constraints \cite{Low2014c}.

Without any voltage drop constraints, the feasible region reduces to the following equalities:
\begin{align}
|V_1|^2 - |V_0|^2 & = 2(rp_1+xq_1)-|z|^2\ell,  \label{eq:OPF_two_bus_voltages}\\
	-q_1 & = q_0-x\ell, \label{eq:OPF_two_bus_reactive_power}\\
	-p_1 & = p_0-r\ell, \label{eq:OPF_two_bus_active_power}\\
	\ell & = p_0^2 + q_0^2 \label{eq:OPF_two_bus_current}
\end{align} If $s_1$ is a controllable load with a given power factor, the variables are $(p_0,q_0,|V_1|^2,\ell)$, and the feasible set consists of solutions to \eqref{eq:OPF_two_bus_voltages}--\eqref{eq:OPF_two_bus_current}.  Substituting \eqref{eq:OPF_two_bus_reactive_power}--\eqref{eq:OPF_two_bus_active_power} into \eqref{eq:OPF_two_bus_current} yields a second-order equation in terms of the variable $\ell$. Solving this equation gives two solutions for $\ell$ corresponding to high- and low-voltage solutions $|V_1|^2$ (according to \eqref{eq:OPF_two_bus_voltages}). In other words, the feasible region consists of the two points of intersection between the line defined by \eqref{eq:OPF_two_bus_reactive_power} and \eqref{eq:OPF_two_bus_active_power} with the convex surface defined by \eqref{eq:OPF_two_bus_current}, making it non-convex.

The SOCP relaxation replaces the equality in \eqref{eq:OPF_two_bus_current} by an inequality, requiring $\ell \geq p_0^2+q_0^2$.
Therefore, the relaxation includes the interior of the convex surface and enlarges the feasible set to the line segment joining these two points.

If the objective function of the optimization problem is linear in the active power $p_0$, then the optimal point over the SOCP feasible set is the lower feasible point, corresponding to the high-voltage solution, and hence, the relaxation is exact.

However, adding voltage drop constraints can result in having a relaxation that is no longer exact. This can be observed as follows. Rewriting \eqref{eq:OPF_two_bus_voltages} in terms of $\ell$, using a fixed voltage magnitude $|V_0|=1$ gives
\begin{align}
\ell = 
\frac{1}{z^2}\left(1 + 2(rp_1+xq_1)-|V_1|^2\right).\label{eq:OPF_two_bus_current_rewritten}
\end{align} Combining \eqref{eq:OPF_two_bus_current_rewritten} and the voltage drop constraints $(1-\Delta)^2 \leq |V_1|^2 \leq (1+\Delta)^2$ gives a box constraint on the variable $\ell$ as
\begin{multline}
\frac{1}{z^2}\left(1 + 2(rp_1+xq_1)-(1+\Delta)^2\right)\leq \ell \leq \\ \leq \frac{1}{z^2}\left(1 + 2(rp_1+xq_1)-(1-\Delta)^2\right).\label{eq:OPF_box_constraint_current}
\end{multline} If constraints \eqref{eq:OPF_box_constraint_current} exclude the lower point, the relaxation is no longer exact, allowing us to compute the optimality gap.

In this example, bus $0$ is the main feeder and has fixed voltage magnitude $|V_0|=1$. The load bus consumes $0.1$ real power with a power factor of $10/\sqrt{101}\approx 1$, resulting in a reactive power consumption of $0.01$. 
The line impedance is defined by $z=0.7+0.1\mathrm{i}$. Recall that the objective is to minimize the power generation $p_0$.

Now, the OPF feasible region is summarized as follows:
\begin{table}[h!]
\centering
\begin{tabular}{lllll}
\hline
                      & $\ell$ & $|V_1|^2$ & $p_0$ & $q_0$ \\ \hline
High-voltage solution & 0.0089 & 1.1376 & -0.0938 & -0.0091 \\
Low-voltage solution  & 2.2751 & 0.0044 & 1.4926 & 0.2175     \\ \hline
\end{tabular}
\caption{Feasible region of the specific two-node example.}
\label{tab:case_delta_0_1}
\end{table}

Formally, the solution for the low-voltage solution meets all constraints. However, it implies consumption instead of generation at the main feeder, lacking a physical interpretation.

As discussed before, the feasibility of solutions in the OPF and SOCP depends on the parameter $\Delta$. We examine two cases: $\Delta = 0.1$ corresponds to voltage drop constraints, $0.81 \leq |V_1|^2 \leq 1.21$, while $\Delta = 0.05$ corresponds to constraints of $0.9025 \leq |V_1|^2 \leq 1.1025$.

\begin{itemize}[wide,nosep]
\item Suppose $\Delta=0.1$. In our example, $r=0.7, x=0.1, p_1=0.1, q_1 = 0.01$ are given. Using \eqref{eq:OPF_box_constraint_current} yields that $-0.1360 \leq \ell \leq 0.6640$. Therefore, only the high-voltage solution, as depicted in Table \ref{tab:case_delta_0_1}, is feasible. The high-voltage solution is in the feasible set of SOCP and, hence, the relaxation is exact.
\item Suppose $\Delta=0.05$. Using \eqref{eq:OPF_box_constraint_current} yields that $0.0790\leq \ell \leq 0.4790$. Therefore, also the high-voltage solution is no longer feasible and the relaxation is not exact. Instead, the SOCP relaxation obtains an infeasible solution, namely at the intersection of the line defined by \eqref{eq:OPF_two_bus_reactive_power}--\eqref{eq:OPF_two_bus_active_power} and the lower bound in \eqref{eq:OPF_box_constraint_current} which yields $p_0 = -0.0447$ with an optimality gap of $-0.0937+0.0447 = -0.0490$.
\end{itemize}

In this specific example, using the polyhedral restriction of Theorem \ref{thm:polyhedral_restriction} yields the following optimization problem:
\begin{subequations}
\begin{align}
     \min_{p_1^c,q_1^c,p_1^g,q_1^g} &~ p_0^c-p_0^g \nonumber \\
	 \quad \textrm{s.t.} \quad ~ &(\mathbf{A}+\Delta\mathbf{B})\begin{pmatrix}
	p_1^c, q_1^c, p_1^g, q_1^g
\end{pmatrix}^T \leq \nonumber  \\
& \quad \Delta\mathbf{1}^{4}+(\mathbf{A}-\Delta(\mathbf{B}+(1-\Delta)\mathbf{C}))\begin{pmatrix}
0, 0, 0.1, 0.01
\end{pmatrix}^T \nonumber \\
	 &  p_1^c = 0,\quad p_1^g= 0.1,
	 \quad q_1^c = 0,\quad q_1^g= 0.01, \label{eq:constraints_active_reactive_poly}
\end{align}\label{eq:polyhedral_restriction_two_node}
\end{subequations}
where the matrices $\mathbf{A}, \mathbf{B}$ and $\mathbf{C}$ are given by \eqref{eq:A}--\eqref{eq:C}.

For $\Delta=0.1$, the only feasible point of \eqref{eq:polyhedral_restriction_two_node} is $s_1 = 0.1+0.01\mathrm{i}$, aligning with constraints \eqref{eq:constraints_active_reactive_poly}. This immediately yields the power generation at the main feeder by \eqref{eq:power_root_node} as $s_0 = -0.0938-0.0109\mathrm{i}$, resulting in $p_0 = -0.0938$. However, for $\Delta=0.05$, there is no feasible point for \eqref{eq:polyhedral_restriction_two_node}.

\subsubsection{Three-node example}
In this example, we consider the three-node model from Section \ref{subsec:models}. We set the voltage at the root node to $V_0=1$, and use equal resistances on the lines, $r_{01}=r_{12}=0.01$, and equal reactances, $x_{01}=x_{12}=0.001$. This yields a relatively high resistance-to-reactance ratio, which is usual in distribution networks. Additionally, we define the voltage drop control parameter as $\Delta=0.1$, and we constrain power consumption and generation within the bounds $\mathcal{P}_1^c = \mathcal{P}_2^c = [0,35]$ and $
\mathcal{P}_1^g = \mathcal{P}_2^g = [0,35]$.


We address two optimization problems over these regions: one maximizes the active load, and the other minimizes the active generation in the network using Algorithm \ref{alg:restrictions}.

Irrespective of the objective function, Algorithm \ref{alg:restrictions} follows the same steps. For initialization, we choose the feasible point $(\mathbf{V},\mathbf{s}) = (\mathbf{1},\mathbf{0})$, setting $\mathbf{V}^{(0)}=\mathbf{1}$ and $\mathbf{s}^{(0)}=\mathbf{0}$. We ensure a maximum voltage deviation of $10\%$ from the nominal voltage magnitude $|\hat{V}_0|=1$. This means that all voltage vectors that satisfy the constraint $|V_j^{(k)}-\mathbf{1}|\leq 0.1$ for all $j\in\mathcal{N}\backslash\{0\}$ and every iteration $k$ are allowed. In other words, we set $\Delta^{(0)} = 0.1$. In the first step, we construct a polyhedral restriction around the feasible point $(\mathbf{V}^{(0)},\mathbf{s}^{(0)})$ using Theorem \ref{thm:polyhedral_restriction} (or in the special case of $(\mathbf{V},\mathbf{s}) = (\mathbf{1},\mathbf{0})$ using Corollary \ref{cor:polyhedral_restriction}). For the second step, we solve the optimization problem defined in \eqref{eq:OPF_prime_P} over the constructed polyhedral restriction. The outcome, a new load vector $\mathbf{s}^{(1)}$, maximizes the objective function in \eqref{eq:OPF_prime_P} and is contained in the true feasible region. As a third step, the new load vector $\mathbf{s}^{(1)}$ is used to compute the corresponding voltage vector $\mathbf{V}^{(1)}$ via the power flow equations in \eqref{eq:fixed_point_eq}. To enforce the constraint $|V_j^{(2)}-\hat{V}_0|\leq \Delta^{(0)}$ for all $j\in\mathcal{N}\backslash\{0\}$ for the second iteration, we allow the maximal difference between corresponding elements of the vectors $\mathbf{V}^{(2)}$ and $\mathbf{V}^{(1)}$, 
to be $\Delta^{(1)}:=\Delta^{(0)}-\|\mathbf{V}^{(1)}-\mathbf{1}\|_{\infty}$,
such that,
\begin{align}
\|\mathbf{V}^{(2)}-\mathbf{1}\|_{\infty} & \leq \|\mathbf{V}^{(2)}-\mathbf{V}^{(1)}\|_{\infty} + \|\mathbf{V}^{(1)}-\mathbf{1}\| \label{eq:desired_inequality_delta_1}\\
& \leq \Delta^{(1)} + \|\mathbf{V}^{(1)}-\mathbf{1}\|_{\infty} \leq \Delta^{(0)}, \label{eq:desired_inequality_delta}
\end{align} as desired. For any iteration $k$, updating the parameter $\Delta^{(k)}$ to control the voltage drop as
\begin{align*}
\Delta^{(k)}:=\Delta^{(0)}-\|\mathbf{V}^{(k)}-\mathbf{1}\|_{\infty},
\end{align*} yields the desired inequality $|V_j^{(k)}-1|\leq \Delta^{(0)}$ for all nodes $j\in\mathcal{N}\backslash\{0\}$ and every iteration $k$, according to the same logic as in \eqref{eq:desired_inequality_delta_1}--\eqref{eq:desired_inequality_delta} for any iterations $k$ and $k+1$ instead of iterations $1$ and $2$. Iterating this procedure of constructing polyhedral restrictions around feasible points, solving optimization problems with polyhedral constraints, and updating new optimal feasible points while the relative Eucledian distance between subsequent load flow solutions $\mathbf{s}^{(k+1)}$ and $\mathbf{s}^{(k)}$ is larger than a predefined threshold $\epsilon = 0.01$, yield a sequence of optimal (with respect to each polyhedral restriction) and feasible load flow solutions $\mathbf{s}^{(1)},\ldots,\mathbf{s}^{(K)}$.

We have discussed the sequential optimization method for a general objective function in the three-node network. Now, we discuss the case where we maximize the active load.

\paragraph{Maximize active load} We aim to solve the following OPF problem, where the objective is to maximize the active load on the network, subject to several constraints:
\begin{align}
    \max_{\mathbf{p}^c,\mathbf{p}^g} & \ \sum_{j=1}^N (p_j^c-p_j^g) \quad
	\textrm{s.t.} \quad \eqref{eq:OPF-con-pfe}\text{--}\eqref{eq:OPF-load-con} . \label{eq:OPF_prime}
\end{align}

For the polyhedral restriction, we substitute the constraints in \eqref{eq:OPF-problem} by the constraints in \eqref{eq:OPF-adjusted-problem}, to obtain
\begin{align}
    \max_{\mathbf{p}^c,\mathbf{p}^g} \ \sum_{j=1}^N (p_j^c-p_j^g)\quad
	\textrm{s.t.} \quad & \eqref{eq:OPF-con-pfe-new}\text{--}\eqref{eq:OPF-load-positive-new}.\label{eq:OPF_prime_P}
\end{align}

We compare the optimal solutions of the convex relaxation \eqref{eq:OPF_prime} and the polyhedral restriction of \eqref{eq:OPF_prime_P}. In this example, the convex relaxation is exact \cite[Theorem 4]{Gan2015b}, meaning the optimal solution of the convex relaxation matches the true optimal solution. Meanwhile, the optimal solution of the polyhedral restriction serves a lower bound on the true optimal solution.

\begin{figure}
    \centering
    \includegraphics[width=.24\textwidth]{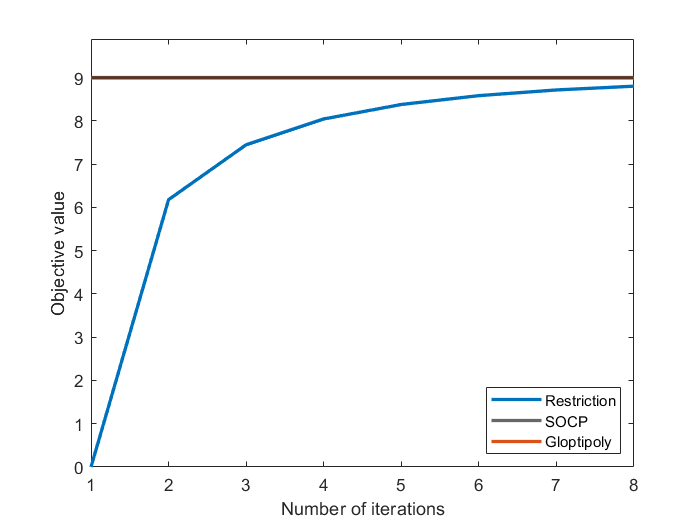}
    \includegraphics[width=.24\textwidth]{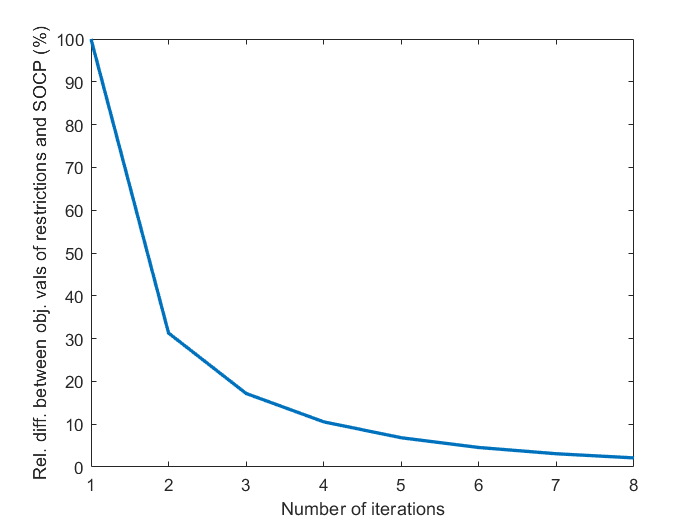}
    \caption{Accuracy of \eqref{eq:OPF_prime_P} for the three-node model.}
    \label{fig:accuracy_of_opf_prime_three_node_model}
\end{figure}

In Figure \ref{fig:accuracy_of_opf_prime_three_node_model}, the left side displays the optimal values of \eqref{eq:OPF_prime} and \eqref{eq:OPF_prime_P} as a function of the iteration count. Notably, the optimal values of the polyhedral restrictions increase with iterations, serving as a lower bound for the optimal value of \eqref{eq:OPF_prime}. On the right side of Figure \ref{fig:accuracy_of_opf_prime_three_node_model}, the relative error between the optimal values of \eqref{eq:OPF_prime} and \eqref{eq:OPF_prime_P} is shown. The relative error remains below $2\%$ after the sequential optimization method reached its predefined threshold.

So far, we have discussed the case where the resistance-to-reactance ratio is the same throughout the network, and the resistance and reactance of each cable are identical. Below, we extend our discussion for a diverse range of the resistance-to-reactance ratio, varying between the two lines. Given the typically high resistance-to-reactance ratio in distribution networks, we set the reactance values at $x_{01} = x_{12} = 0.001$ and vary the resistance values between $0.007$ and $0.03$, resulting in a resistance-to-reactance ratio variation between $7$ and $30$.
By varying the resistance values, 
we obtain that the optimal values by the sequential optimization method are close to the true optimal values found by the SOCP relaxation. The maximum relative difference between the objective values is approximately $2\%$, because of the predefined threshold $\epsilon=0.01$ in Algorithm \ref{alg:restrictions}. For example, when we set $\epsilon=0.001$, the maximum relative difference across the range of resistance and reactance values decreases to $0.25\%$. The procedure appears insensitive to the resistance-to-reactance ratio on the line that is furthest away from the main feeder. This is due to the marginal effect of the optimal power load on the second node; to maximize the total power load in the network, it is optimal to put approximately no load on the second node.

\paragraph{Minimize active load} In this example, we aim to solve the OPF problem in \eqref{eq:OPF_prime} as before, but where the objective is to minimize (instead of maximize) the active load on the network, subject to the same constraints.

Similarly, for the polyhedral restriction, we substitute the constraints in \eqref{eq:OPF-problem} by the constraints in \eqref{eq:OPF-adjusted-problem}, and change the objective function from a maximizing to minimizing problem.




In Figure \ref{fig:iterations_three_node_case_obj_vals}, the objective values according to the iterative process are also visualized, next to the objective value found by the SOCP relaxation and the true solution found by solving the optimization problem in \eqref{eq:OPF_prime}. 

Now, similar to the maximization case, we discuss similar results for a diverse range of the resistance over reactance ratios. We fix the reactance values at $x_{01} = x_{12} = 0.001$ and vary the resistance values between $0.007$ and $0.03$, such that the resistance over reactance ratio varies between $7$ and $30$.


\begin{figure}
    \centering
    \includegraphics[width=.6\linewidth]{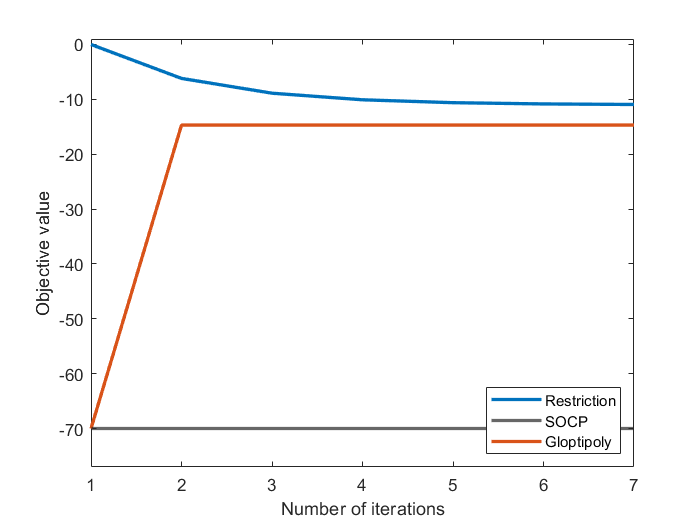}
    \caption{Objective values found by the polyhedral restrictions, the SOCP relaxation and GloptiPoly for different iterations.}
    \label{fig:iterations_three_node_case_obj_vals}
\end{figure}

By varying the resistance values, 
we see that relative difference between the optimal values obtained by the procedure of iteratively constructing polyhedral restrictions around feasible points and the true optimal values obtained by GloptiPoly is large. The maximum relative difference between the objective values is approximately $30\%$ when the predefined threshold $\epsilon=0.01$ in Algorithm \ref{alg:restrictions} is. Of course, when we set $\epsilon=0.001$, the maximum relative difference decreases for the complete range of resistance and reactance values, but not as much as in the maximization case. Also in this case, the maximum relative difference is around $30\%$.

\subsubsection{SCE-47 network} We consider the SCE-47 network with $|V_0|=1$ and set the parameter to control the voltage drop at $\Delta=0.1$. We distinguish between two different types of customers in the network. We assume that the set of nodes that can only generate electricity is given by $\mathcal{S} = \{12,16,18,22,23\}$. The other nodes can generate and consume electricity. Therefore, we range the bound on the power consumption and generation for $\overline{p_j^c}=\overline{p_j^g}$ between $0.005$ and $0.03$ for $j\in\mathcal{N}\backslash\{0\}$ and we set the bound on the power generation at $\overline{p_j^g}=0.01$, and maximize and minimize the active load in the network.


\begin{figure}[h!]
\centering
\begin{subfigure}{.25\textwidth}
  \centering
  \includegraphics[width=\linewidth]{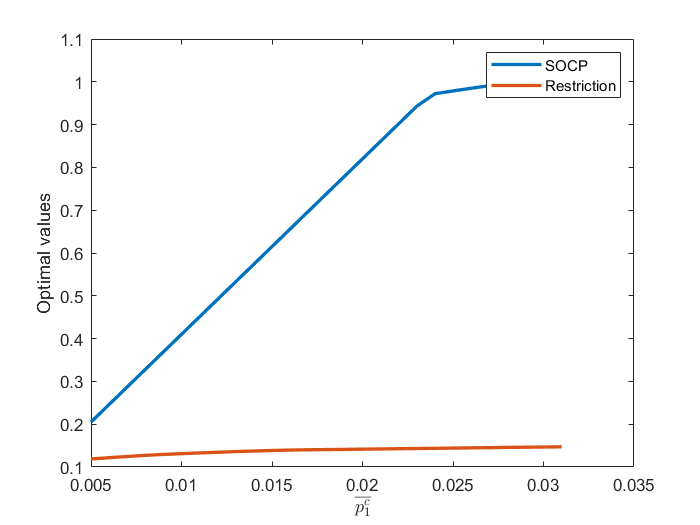}
  \caption{Maximization of active load.}
  \label{fig:optimal_point_SDP_P_sce47_max}
\end{subfigure}%
\begin{subfigure}{.25\textwidth}
  \centering
  \includegraphics[width=\linewidth]{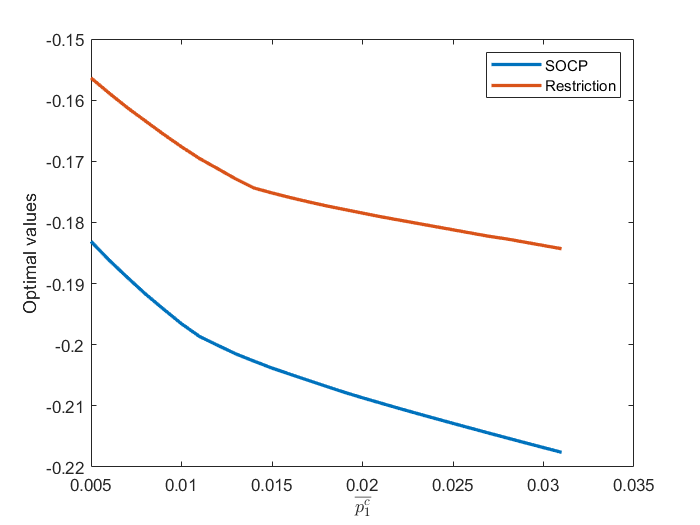}
  \caption{Minimization of active load.}
 \label{fig:optimal_point_SDP_P_sce47_min}
\end{subfigure}
\caption{Solutions of the SOCP relaxation and the final iteration of the polyhedral restrictions.}
\label{fig:accuracy_opf_prime_P_sce47}
\end{figure}

The results are shown in Figure \ref{fig:accuracy_opf_prime_P_sce47}. The maximal netto active load is different for the SOCP relaxation and for the polyhedral restriction. The relative error between the solutions of the SOCP relaxation and the polyhedral restriction is larger in the case of maximization of the active load than in the case of minimization. However, the solutions obtained by the SOCP relaxation are infeasible, i.e., the solutions do not satisfy the power flow equations.
The solutions obtained by the final iteration of the polyhedral restriction are feasible and provide at least bounds on the true optimal solutions.

\section{Conclusion}\label{sec:conclusion}
We have shown how to construct a polyhedral restriction of the feasibility region. The restriction can be built given network parameters, such as the topology and admittance matrix, and a feasible operating point.
We have proposed a sequential optimization method to compute (bounds on) solutions to OPF problems, such that we can guarantee feasibility of the final solution. The final solution can always be used as a bound on the true optimal solution.
Empirical studies have verified that the polyhedral restriction can be used to assess whether there exists a feasible solution that satisfies the power flow equations and voltage drop constraint, and to compute (bounds on) optimal power flow solutions for several test networks.

\appendix

In this section, we show that under the conditions of Theorem \ref{thm:polyhedral_restriction}, operator $G$ is a self-map of the voltages $\mathbf{V}$ and is continuous on the compact set $D$. To show that $G$ is a self-map, we need additional Lemmas \ref{lemma:ineq_voltage_magnitude} and \ref{lemma:ineq_voltage_angles}.

\subsection{Proof of Lemma \ref{lemma:self-mapping}}

\begin{proof}
Let $\tilde{\mathbf{s}}\in P$ and $\mathbf{V}\in D$.  Since $(\hat{\mathbf{V}},\hat{\mathbf{s}})$ satisfies the power flow equations in \eqref{eq:fixed_point_eq}, we have that
\begin{align}
    \hat{\mathbf{V}} = G(\hat{\mathbf{V}}) = V_0\mathbf{1}^N-\mathbf{Z}\text{diag}(\hat{\mathbf{V}}^*)^{-1}\hat{\mathbf{s}}^*.\label{eq:solution_satisfies_PFE}
\end{align} We have to show that $G(\mathbf{V})\in D$, i.e., or equivalently
\begin{align}
    \|G(\mathbf{V})-\hat{\mathbf{V}}\|_{\infty} \leq \Delta|\hat{\mathbf{V}}|.\label{eq:equivalent_self_mapping}
\end{align} Plugging in \eqref{eq:solution_satisfies_PFE} in the left-hand side of \eqref{eq:equivalent_self_mapping} yields
\begin{multline*}
    \|G(\mathbf{V})-\hat{\mathbf{V}}\|_{\infty} 
    = \\ = \|\mathbf{Z}(\text{diag}(\hat{\mathbf{V}}^*)^{-1}-\text{diag}(\mathbf{V}^*)^{-1})\mathbf{\hat{s}}^* + \mathbf{Z}\text{diag}(\mathbf{V}^*)^{-1}(\mathbf{\hat{s}}^*-\mathbf{s}^*)\|_{\infty}.\nonumber
\end{multline*} 
Continuing the derivation, we have by the triangle inequality that
\begin{multline}
    \|G(\mathbf{V})-\hat{\mathbf{V}}\|_{\infty} 
    \leq \\ \leq \bigg\|\mathbf{Z}\left(\text{diag}\left(\frac{\mathbf{V}^*-\hat{\mathbf{V}}^*}{\hat{\mathbf{V}}^*\mathbf{V}^*} \right) \right)\mathbf{\hat{s}}^* \bigg\|_{\infty} + \bigg\|\mathbf{Z}\text{diag}(\mathbf{V}^*)^{-1}(\mathbf{\hat{s}}^*-\mathbf{s}^*)\bigg\|_{\infty}.\label{eq:both_terms}
\end{multline} Consider both terms in \eqref{eq:both_terms} separately. For the first term in \eqref{eq:both_terms}, we have by the definition of the vector infinity norm and the use of the triangle inequality,
\begin{multline}
    \bigg\|\mathbf{Z}\left(\text{diag}\left(\frac{\mathbf{V}^*-\hat{\mathbf{V}}^*}{\hat{\mathbf{V}}^*\mathbf{V}^*} \right) \right)\mathbf{\hat{s}}^* \bigg\|_{\infty} \leq \\
     \leq \max_{j\in\mathcal{N}\backslash\{0\}}\sum_{k=1}^N \big|Z_{jk}\big|\big|\hat{s}_k^*\big|\frac{|V_k^*-\hat{V}_k^*|}{|\hat{V}_k^*||V_k^*|}.\label{eq:first_term} 
\end{multline} Since $\mathbf{V}\in D$, we have $|V_j^*-\hat{V}_j^*|=|V_j-\hat{V}_j|\leq \Delta|\hat{V}_j|$ for all $j\in\mathcal{N}\backslash\{0\}$. By the triangle inequality, we have $|\hat{s}_k| = |\hat{p}_k+\mathrm{i}\hat{q}_k| \leq |\hat{p}_k|+|\hat{q}_k|$. Moreover, by construction, $|\hat{p}_k| = |\hat{p}_k^c-\hat{p}_k^g| = \hat{p}_k^c+\hat{p}_k^g$, and similarly $|\hat{q}_k| = |\hat{q}_k^c-\hat{q}_k^g| = q_k^c+q_k^g$. Applying these inequalities and equalities in \eqref{eq:first_term} yield
\begin{multline}
    \bigg\|\mathbf{Z}\left(\text{diag}\left(\frac{\mathbf{V}^*-\hat{\mathbf{V}}^*}{\hat{\mathbf{V}}^*\mathbf{V}^*} \right) \right)\mathbf{\hat{s}}^* \bigg\|_{\infty} 
    \leq \\ \leq \max_{j\in\mathcal{N}\backslash\{0\}}\sum_{k=1}^N \frac{\big|Z_{jk}\big|(\hat{p}_k^c+\hat{p}_k^g+\hat{q}_k^c+\hat{q}_k^g)\Delta}{|V_k^*|}.\label{eq:denominator_voltage}
\end{multline} 
Finally, we use Lemma \ref{lemma:ineq_voltage_magnitude} in the denominator of \eqref{eq:denominator_voltage} to bound the first term in \eqref{eq:both_terms} as
\begin{multline}
\bigg\|\mathbf{Z}\left(\text{diag}\left(\frac{\mathbf{V}^*-\hat{\mathbf{V}}^*}{\hat{\mathbf{V}}^*\mathbf{V}^*} \right) \right)\mathbf{\hat{s}}^* \bigg\|_{\infty} \leq \\ \leq \max_{j\in\mathcal{N}\backslash\{0\}}\underbrace{\sum_{k=1}^N \frac{\big|Z_{jk}\big|(\hat{p}_k^c+\hat{p}_k^g+\hat{q}_k^c+\hat{q}_k^g)\Delta}{(1-\Delta)|\hat{V}_k|}}_{:=\beta_1}.\label{eq:first_term_of_both_terms}
\end{multline} For the second term in \eqref{eq:both_terms}, we have by definition
\begin{align*}
    \bigg\|\mathbf{Z}\text{diag}(\mathbf{V}^*)^{-1}(\mathbf{\hat{s}}^*-\mathbf{s}^*)\bigg\|_{\infty} 
    & = \max_{j\in\mathcal{N}\backslash\{0\}}\left|\sum_{k=1}^N Z_{jk}\frac{(\hat{s}_k^*-s_k^*)V_k}{|V_k|^2} \right|.
\end{align*} Using the definition of $V_k$ to express each voltage as $V_k = |V_k|\exp(\mathrm{i}\theta_k)$, and multiplying by $\exp(-\mathrm{i}\hat{\theta}_j)$ in the numerator and denominator yields
\begin{multline}
\bigg\|\mathbf{Z}\text{diag}(\mathbf{V}^*)^{-1}(\mathbf{\hat{s}}^*-\mathbf{s}^*)\bigg\|_{\infty} = \\
     = \max_{j\in\mathcal{N}\backslash\{0\}}\left|\sum_{k=1}^N Z_{jk}\frac{\splitfrac{(\hat{s}_k^*-s_k^*)(|V_k|\cos(\theta_k-\hat{\theta}_j)+}{+\mathrm{i}|V_k|\sin(\theta_k-\hat{\theta}_j)))}}{|V_k|^2} \right|. \label{eq:GV-V}
\end{multline} since we have that $\left|1/\exp(-\mathrm{i}\theta_j)\right| = 1$.

We have $Z_{jk}=R_{jk}+\mathrm{i}X_{jk}$ for all $j\in\mathcal{N}\backslash\{0\}$ and $\hat{s}_k^*-s_k^* = \hat{p}_k-p_k-\mathrm{i}(\hat{q}_k-q_k) = \hat{p}_k^c-\hat{p}_k^g-(p_k^c-p_k^g)-\mathrm{i}(\hat{q}_k^c-\hat{q}_k^g-(q_k^c-q_k^g))$. Using these notations and separating in sine and cosine terms, we rewrite \eqref{eq:GV-V} to
\begin{multline}
    \Biggl| \sum_{k=1}^N \frac{\splitfrac{(R_{jk}(\hat{p}_k^c-\hat{p}_k^g-(p_k^c-p_k^g))+}{+X_{jk}(\hat{q}_k^c-\hat{q}_k^g-(q_k^c-q_k^g)))|V_k|\cos(\theta_k-\hat{\theta}_j)}}{|V_k|^2} + \\
    +\mathrm{i}\frac{\splitfrac{(-R_{jk}(\hat{q}_k^c-\hat{q}_k^g-(q_k^c-q_k^g))+}{+X_{jk}(\hat{p}_k^c-\hat{p}_k^g-(p_k^c-p_k^g)))|V_k|\cos(\theta_k-\hat{\theta}_j)}}{|V_k|^2}  + \\
    + \frac{\splitfrac{(R_{jk}(\hat{q}_k^c-\hat{q}_k^g-(q_k^c-q_k^g))-}{-X_{jk}(\hat{p}_k^c-\hat{p}_k^g-(p_k^c-p_k^g)))|V_k|\sin(\theta_k-\hat{\theta}_j)}}{|V_k|^2} + \\
    + \mathrm{i}\frac{\splitfrac{(R_{jk}(\hat{p}_k^c-\hat{p}_k^g-(p_k^c-p_k^g))+}{+X_{jk}(\hat{q}_k^c-\hat{q}_k^g-(q_k^c-q_k^g)))|V_k|\sin(\theta_k-\hat{\theta}_j)}}{|V_k|^2}\Biggr|\label{eq:bound_voltage_diff_1}
\end{multline} to apply the bounds in Lemma \ref{lemma:ineq_voltage_angles} later. First, we use the triangle inequality to bound the expression in  \eqref{eq:bound_voltage_diff_1} to
\begin{multline}
    \Biggl| \underbrace{\sum_{k=1}^N \frac{\splitfrac{(R_{jk}(\hat{p}_k^c-\hat{p}_k^g-(p_k^c-p_k^g))+}{+X_{jk}(\hat{q}_k^c-\hat{q}_k^g-(q_k^c-q_k^g)))|V_k|\cos(\theta_k-\hat{\theta}_j)}}{|V_k|^2}}_{:=\alpha_1} \Biggr| + \\
    +\Biggl| \underbrace{\sum_{k=1}^N \frac{\splitfrac{(-R_{jk}(\hat{q}_k^c-\hat{q}_k^g-(q_k^c-q_k^g))+}{+X_{jk}(\hat{p}_k^c-\hat{p}_k^g-(p_k^c-p_k^g)))|V_k|\cos(\theta_k-\hat{\theta}_j)}}{|V_k|^2}}_{:=\alpha_2} \Biggr| + \\
    + \Biggl|\underbrace{\sum_{k=1}^N \frac{\splitfrac{((\mathrm{i}R_{jk}-X_{jk})(\hat{p}_k^c-\hat{p}_k^g-(p_k^c-p_k^g))+}{\splitfrac{+(R_{jk}+\mathrm{i}X_{jk})(\hat{q}_k^c-\hat{q}_k^g-(q_k^c-q_k^g))))}{|V_k|\sin(\theta_k-\hat{\theta}_j)}}}{|V_k|^2}}_{:=\beta}\Biggr|.\label{eq:alpha_beta}
\end{multline} Then, again by using the triangle inequality twice, we bound the third term in \eqref{eq:alpha_beta}, i.e. $\beta$, by
\begin{align}
|\beta| 
& \leq \sum_{k=1}^N \frac{\splitfrac{(|\mathrm{i}R_{jk}-X_{jk}||\hat{p}_k^c-\hat{p}_k^g-(p_k^c-p_k^g)|+}{\splitfrac{+|R_{jk}+\mathrm{i}X_{jk}||\hat{q}_k^c-\hat{q}_k^g-(q_k^c-q_k^g)||)}{|V_k|\sin(\theta_k-\hat{\theta}_j)|}}}{|V_k|^2}.
\end{align} Furthermore, notice that by definition, we have the equality $|\mathrm{i}R_{jk}-X_{jk}|=|R_{jk}+\mathrm{i}X_{jk}| = |Z_{jk}|$, and the inequalities $|\hat{p}_k-p_k|\leq |\hat{p}_k|+|p_k|$, and similarly $|\hat{q}_k-q_k| \leq |\hat{q}_k|+|q_k|$ by the triangle inequality. Thus,
\begin{align}
|\beta|\leq \sum_{k=1}^N \frac{|Z_{jk}|(|\hat{p}_k|+|p_k|+|\hat{q}_k|+|q_k|)||V_k|\sin(\theta_k-\hat{\theta}_j)|}{|V_k|^2}.\label{eq:beta2}
\end{align} In what follows, we use the following equalities: $|p_k|=|p_k^c-p_k^g|= p_k^c+p_k^g$ and similarly, $|q_k| = |q_k^c-q_k^g| = q_k^c+q_k^g$. We justify these in Remark \ref{remark:absolute_values} later on. From these equalities and applying Lemma \ref{lemma:ineq_voltage_angles} in \eqref{eq:beta2} then yields
\begin{align}
|\beta|\leq \underbrace{\sum_{k=1}^N \frac{|Z_{jk}|(p_k^c+q_k^c+p_k^g+q_k^g+\hat{p}_k^g+\hat{q}_k^c+\hat{p}_k^g+\hat{q}_k^g)\Delta}{(1-\Delta)^2|\hat{V}_k|}}_{:=\beta_2}.\label{eq:beta2_1}
\end{align} Now, we focus on the terms $\alpha_1$ and $\alpha_2$ in \eqref{eq:alpha_beta}. By definition of absolute values, we have $ |\alpha_1|+|\alpha_2| = \max(\alpha_1,-\alpha_1)+\max(\alpha_2,-\alpha_2)$. Then, by case distinction, we have
\begin{multline}
    |\alpha_1|+|\alpha_2| \\ = \max(\alpha_1+\alpha_2,\alpha_1-\alpha_2,-\alpha_1+\alpha_2,-\alpha_1-\alpha_2),
    \label{eq:absolute_value_sum}
\end{multline} 
so it suffices to bound each of the four expressions in \eqref{eq:absolute_value_sum}.
We have $\mathbf{V}\in D$ by assumption, so we can use the bounds from Lemmas \ref{lemma:ineq_voltage_magnitude} and \ref{lemma:ineq_voltage_angles}. Furthermore, recall that $p_k^c,q_k^c,p_k^g,q_k^g\geq 0$ for all $k\in\mathcal{N}\backslash\{0\}$. Applying these bounds gives
\begin{subequations}\label{eq:bounds_alpha_1}
\begin{multline}
\alpha_1 \leq \sum_{k=1}^N \frac{-R_{jk}(p_k^c-\hat{p}_k^c)}{(1-\Delta)^2|\hat{V}_k|^2}+\frac{\Delta R_{jk}(p_k^c-\hat{p}_k^c)}{(1-\Delta)^2|\hat{V}_k|^2}  + \frac{2\Delta R_{jk}\hat{p}_k^c}{(1-\Delta)^2|\hat{V}_k|^2} \\
\frac{-X_{jk}(q_k^c-\hat{q}_k^c)}{(1-\Delta)^2|\hat{V}_k|^2}+\frac{\Delta X_{jk}(q_k^c-\hat{q}_k^c)}{(1-\Delta)^2|\hat{V}_k|^2}  + \frac{2\Delta X_{jk}\hat{q}_k^c}{(1-\Delta)^2|\hat{V}_k|^2} \\
\frac{R_{jk}(p_k^g-\hat{p}_k^g)}{(1-\Delta)^2|\hat{V}_k|^2}+\frac{\Delta R_{jk}(p_k^g-\hat{p}_k^g)}{(1-\Delta)^2|\hat{V}_k|^2}  + \frac{2\Delta R_{jk}\hat{p}_k^g}{(1-\Delta)^2|\hat{V}_k|^2} \\
\frac{X_{jk}(q_k^g-\hat{q}_k^g)}{(1-\Delta)^2|\hat{V}_k|^2}+\frac{\Delta X_{jk}(q_k^g-\hat{q}_k^g)}{(1-\Delta)^2|\hat{V}_k|^2}  + \frac{2\Delta X_{jk}\hat{q}_k^g}{(1-\Delta)^2|\hat{V}_k|^2},
\end{multline} and
\begin{multline}
-\alpha_1 \leq \sum_{k=1}^N \frac{R_{jk}(p_k^c-\hat{p}_k^c)}{(1-\Delta)^2|\hat{V}_k|^2}+\frac{\Delta R_{jk}(p_k^c-\hat{p}_k^c)}{(1-\Delta)^2|\hat{V}_k|^2}  + \frac{2\Delta R_{jk}\hat{p}_k^c}{(1-\Delta)^2|\hat{V}_k|^2} \\
\frac{X_{jk}(q_k^c-\hat{q}_k^c)}{(1-\Delta)^2|\hat{V}_k|^2}+\frac{\Delta X_{jk}(q_k^c-\hat{q}_k^c)}{(1-\Delta)^2|\hat{V}_k|^2}  + \frac{2\Delta X_{jk}\hat{q}_k^c}{(1-\Delta)^2|\hat{V}_k|^2} \\
\frac{-R_{jk}(p_k^g-\hat{p}_k^g)}{(1-\Delta)^2|\hat{V}_k|^2}+\frac{\Delta R_{jk}(p_k^g-\hat{p}_k^g)}{(1-\Delta)^2|\hat{V}_k|^2}  + \frac{2\Delta R_{jk}\hat{p}_k^g}{(1-\Delta)^2|\hat{V}_k|^2} \\
\frac{-X_{jk}(q_k^g-\hat{q}_k^g)}{(1-\Delta)^2|\hat{V}_k|^2}+\frac{\Delta X_{jk}(q_k^g-\hat{q}_k^g)}{(1-\Delta)^2|\hat{V}_k|^2}  + \frac{2\Delta X_{jk}\hat{q}_k^g}{(1-\Delta)^2|\hat{V}_k|^2},
\end{multline}
\end{subequations} Similarly,
\begin{subequations}\label{eq:bounds_alpha_2}
\begin{multline}
\alpha_2 \leq \sum_{k=1}^N \frac{-X_{jk}(p_k^c-\hat{p}_k^c)}{(1-\Delta)^2|\hat{V}_k|^2}+\frac{\Delta X_{jk}(p_k^c-\hat{p}_k^c)}{(1-\Delta)^2|\hat{V}_k|^2}  + \frac{2\Delta X_{jk}\hat{p}_k^c}{(1-\Delta)^2|\hat{V}_k|^2} \\
\frac{R_{jk}(q_k^c-\hat{q}_k^c)}{(1-\Delta)^2|\hat{V}_k|^2}+\frac{\Delta R_{jk}(q_k^c-\hat{q}_k^c)}{(1-\Delta)^2|\hat{V}_k|^2}  + \frac{2\Delta R_{jk}\hat{q}_k^c}{(1-\Delta)^2|\hat{V}_k|^2} \\
\frac{X_{jk}(p_k^g-\hat{p}_k^g)}{(1-\Delta)^2|\hat{V}_k|^2}+\frac{\Delta X_{jk}(p_k^g-\hat{p}_k^g)}{(1-\Delta)^2|\hat{V}_k|^2}  + \frac{2\Delta X_{jk}\hat{p}_k^g}{(1-\Delta)^2|\hat{V}_k|^2} \\
\frac{-R_{jk}(q_k^g-\hat{q}_k^g)}{(1-\Delta)^2|\hat{V}_k|^2}+\frac{\Delta R_{jk}(q_k^g-\hat{q}_k^g)}{(1-\Delta)^2|\hat{V}_k|^2}  + \frac{2\Delta R_{jk}\hat{q}_k^g}{(1-\Delta)^2|\hat{V}_k|^2},
\end{multline} and
\begin{multline}
-\alpha_2 \leq \sum_{k=1}^N \frac{X_{jk}(p_k^c-\hat{p}_k^c)}{(1-\Delta)^2|\hat{V}_k|^2}+\frac{\Delta X_{jk}(p_k^c-\hat{p}_k^c)}{(1-\Delta)^2|\hat{V}_k|^2}  + \frac{2\Delta X_{jk}\hat{p}_k^c}{(1-\Delta)^2|\hat{V}_k|^2} \\
\frac{-R_{jk}(q_k^c-\hat{q}_k^c)}{(1-\Delta)^2|\hat{V}_k|^2}+\frac{\Delta R_{jk}(q_k^c-\hat{q}_k^c)}{(1-\Delta)^2|\hat{V}_k|^2}  + \frac{2\Delta R_{jk}\hat{q}_k^c}{(1-\Delta)^2|\hat{V}_k|^2} \\
\frac{-X_{jk}(p_k^g-\hat{p}_k^g)}{(1-\Delta)^2|\hat{V}_k|^2}+\frac{\Delta X_{jk}(p_k^g-\hat{p}_k^g)}{(1-\Delta)^2|\hat{V}_k|^2}  + \frac{2\Delta X_{jk}\hat{p}_k^g}{(1-\Delta)^2|\hat{V}_k|^2} \\
\frac{R_{jk}(q_k^g-\hat{q}_k^g)}{(1-\Delta)^2|\hat{V}_k|^2}+\frac{\Delta R_{jk}(q_k^g-\hat{q}_k^g)}{(1-\Delta)^2|\hat{V}_k|^2}  + \frac{2\Delta R_{jk}\hat{q}_k^g}{(1-\Delta)^2|\hat{V}_k|^2},
\end{multline}
\end{subequations} From the construction of the matrix $\mathbf{A}$, combined with bounds \eqref{eq:first_term_of_both_terms}, \eqref{eq:beta2_1}, \eqref{eq:bounds_alpha_1} and \eqref{eq:bounds_alpha_2}, we see that we can bound the sum of every expression in \eqref{eq:absolute_value_sum} and $\beta_1+\beta_2$ by $((\mathbf{A}+\Delta(\mathbf{B}+\mathbf{C}))(\tilde{\mathbf{s}}-\tilde{\hat{\mathbf{{s}}}}))_{ij}$, for every $i\in\{1,2,3,4\}$ and $j\in \{1,\ldots,4N\}$.
By assumption $\tilde{\mathbf{s}}\in P$, so
\begin{multline*}
((\mathbf{A}+\Delta\mathbf{B})(\tilde{\mathbf{s}}-\tilde{\hat{\mathbf{{s}}}}))+ (\Delta(2\mathbf{B}+(1-\Delta)\mathbf{C})\tilde{\hat{\mathbf{s}}}) \leq \\ \leq \Delta(1-\Delta)\hat{V}_{\text{min}}^3\mathbf{1}^{4N},
\end{multline*} and rewriting gives
$(\mathbf{A}+\Delta\mathbf{B})\tilde{\mathbf{s}} 
 \leq \Delta(1-\Delta)^2\hat{V}_{\text{min}}^3\mathbf{1}^{4N} + (\mathbf{A}-\Delta(\mathbf{B}+(1-\Delta)\mathbf{C})\tilde{\hat{\mathbf{s}}}$.
\end{proof}

\begin{remark}\label{remark:absolute_values}
In the proof of Lemma \ref{lemma:self-mapping}, we handled absolute values of decision variables $p_j$ and $q_j$ as follows:
\begin{align*}
|p_j| = p_j^c + p_j^g, \quad |q_j| = q_j^c + q_j^g.
\end{align*} This is a natural choice, since we have introduced the notation:
\begin{align*}
&p_j = p_j^c - p_j^g,\quad q_j = q_j^c - q_j^g, \\
&p_j^c,p_j^g,q_j^c,q_j^g\geq 0,
\end{align*}
earlier. However, this way of handling absolute values is only correct if, for each $j$, at least one of the values $p_j^c$ and $p_j^g$, and at least one of the values $q_j^c$ and $q_j^g$ is zero. In other words, for each $j$, we need: $p_j^c p_j^g = 0$ and $q_j^c q_j^g = 0$.

Consider the set $P$ (as defined in Theorem \ref{thm:polyhedral_restriction}) as the constraint set in a certain optimization problem. If the optimization problem yields a solution where at least one of the values $p_j^c$ and $p_j^g$, and at least one of the values $q_j^c$ and $q_j^g$ is zero, we handled the absolute values correctly. Thus, suppose that this is not the case, i.e.; suppose there exists at least one combination of variables $p_j^c$ and  $p_j^g$, or $q_j^c$ and $q_j^g$ which are both strictly positive. In what follows, we show that there exists another solution ${p_j'}^c,{p_j'}^g,{q_j'}^c,{q_j'}^g$ with the property that for every $j$, ${p_j'}^c {p_j'}^g = 0$ and ${q_j'}^c {q_j'}^g = 0$, and satisfies the constraints and has the same value for the objective function.

Therefore, let
\begin{align*}
    {p_j'}^c,\ {p_j'}^g = \begin{cases}
        p_j^c,\ p_j^g & \text{if } p_j^c p_j^g = 0,\\
        p_j^c-p_j^g,\ 0 & \text{if } p_j^c p_j^g \neq 0,\ p_j^c \geq p_j^g,\\
        0,\ p_j^g-p_j^c & \text{if } p_j^c p_j^g \neq 0,\ p_j^c < p_j^g,
        \end{cases}\
\end{align*} and in a similar way, let,
\begin{align*}
    {q_j'}^c,\ {q_j'}^g = \begin{cases}
        q_j^c,\ q_j^g & \text{if } q_j^c q_j^g = 0,\\
        q_j^c-q_j^g,\ 0 & \text{if } q_j^c q_j^g \neq 0,\ q_j^c \geq q_j^g,\\
        0,\ q_j^g-q_j^c & \text{if } q_j^c q_j^g \neq 0,\ q_j^c < q_j^g.
        \end{cases}\
\end{align*} and denote ${\mathbf{p}'}^c = ({p_1'}^c,\ldots,{p_N'}^c)^T$, ${\mathbf{p}'}^g = ({p_1'}^g,\ldots,{p_N'}^g)^T$, ${\mathbf{q}'}^c = ({q_1'}^c,\ldots,{q_N'}^c)^T$, ${\mathbf{q}'}^g = ({q_1'}^g,\ldots,{q_N'}^g)^T$ and $\tilde{\mathbf{s}}' = ({\mathbf{p}'}^c,{\mathbf{q}'}^c,{\mathbf{p}'}^g, {\mathbf{q}'}^g)^T$. Thus, the solution ${p_j'}^c,{p_j'}^g,{q_j'}^c,{q_j'}^g$ has the property that for every $j$, ${p_j'}^c {p_j'}^g = 0$ and ${q_j'}^c {q_j'}^g = 0$ by construction.

To see that the solution ${p_j'}^c,{p_j'}^g,{q_j'}^c,{q_j'}^g$ satisfies the constraints,  we show that
\begin{align*}
(\mathbf{A}+\Delta \mathbf{B})(\tilde{\mathbf{s}}'-\tilde{\mathbf{s}}) \leq 0.
\end{align*} This is sufficient, since $\tilde{\mathbf{s}}\in P$.
Notice that, for each $j$, ${p_j'}^c - p_j^c = {p_j'}^g-p_j^g \leq 0$ and ${q_j'}^c-q_j^c = {q_j'}^g-q_j^g \leq 0$, which implies that,
\begin{align*}
\tilde{\mathbf{s}}'-\tilde{\mathbf{s}} = \begin{pmatrix}
{\mathbf{p}'}^c - {\mathbf{p}}^c \\
{\mathbf{q}'}^c - {\mathbf{q}}^c \\
{\mathbf{p}'}^g - {\mathbf{p}}^g \\
{\mathbf{q}'}^g - {\mathbf{q}}^g
\end{pmatrix} =
\begin{pmatrix}
{\mathbf{p}'}^c - {\mathbf{p}}^c \\
{\mathbf{q}'}^c - {\mathbf{q}}^c \\
{\mathbf{p}'}^c - {\mathbf{p}}^c \\
{\mathbf{q}'}^c - {\mathbf{q}}^c
\end{pmatrix} \leq \mathbf{0}.
\end{align*} In other words, the vector $\tilde{\mathbf{s}}'-\tilde{\mathbf{s}}$ has rows $1,\ldots,N$ equal to rows $2N+1,\ldots,3N$ and rows $N+1,\ldots,2N$ equal to rows $3N+1,\ldots,4N$, and all of its entries are values smaller than zero. However, since, for each $j$, ${p_j'}^c - p_j^c = {p_j'}^g-p_j^g$, and ${q_j'}^c-q_j^c = {q_j'}^g-q_j^g$ we also have, for each $j$, ${p_j'}^c - {p_j'}^g = p_j^c-p_j^g$, and ${q_j'}^c-{q_j'}^g  = q_j^c-q_j^g$ which assures that the objective function has the same value. Furthermore, notice that the matrix $A$ also has a particular structure, see \eqref{eq:A}, namely,
$A$ has columns which have opposite signs. The matrix $A$ has columns $1,\ldots,N$ which are identical to columns $2N+1,\ldots,3N$, but with opposite signs, and has columns $N+1,\ldots,2N$ which are identical to columns $3N+1,\ldots,4N$, but with opposite signs. Hence,
\begin{align*}
(\mathbf{A}+\Delta \mathbf{B})(\tilde{\mathbf{s}}'-\tilde{\mathbf{s}}) &= \mathbf{A}(\tilde{\mathbf{s}}'-\tilde{\mathbf{s}}) + \Delta\mathbf{B}(\tilde{\mathbf{s}}'-\tilde{\mathbf{s}}) \\
& = \mathbf{0} + \Delta\mathbf{B}(\tilde{\mathbf{s}}'-\tilde{\mathbf{s}}) \leq \mathbf{0},
\end{align*} since $\Delta\geq 0$ and the matrix $\mathbf{B}$ only contains non-negative elements.
\end{remark}

\subsection{Proof of Lemma \ref{lemma:ineq_voltage_magnitude}}
\begin{proof}
Let $\mathbf{V}\in D$. We show $(1-\Delta)|\hat{V_j}|\leq |V_j|\leq (1+\Delta)|\hat{V_j}|$. First, by the triangle inequality we have
\begin{align*}
    |V_j| & = |V_j-\hat{V_j}+\hat{V_j}|
    \leq |V_j-\hat{V_j}|+|\hat{V_j}|.
\end{align*} Since $\mathbf{V}\in D$, we have for all $j\in\mathcal{N}\backslash\{0\}$, $|V_j-\hat{V_j}|\leq \Delta|\hat{V_j}|$. Hence,
\begin{align}
    |V_j| & \leq \Delta|\hat{V_j}|+|\hat{V_j}|
    = (1+\Delta)|\hat{V_j}|. \label{eq:voltage_drop_1}
\end{align} Second, by the reverse triangle inequality we have
\begin{align*}
    |V_j| = |-V_j| & = |(\hat{V_j}-V_j)-\hat{V_j}|
    \geq |\hat{V_j}|-|\hat{V_j}-V_j|.
\end{align*} Again, since $\mathbf{V}\in D$, we have for all $j\in\mathcal{N}\backslash\{0\}$, $|V_j-\hat{V_j}|\leq \Delta|\hat{V_j}|$ or equivalently, $-|V_j-\hat{V_j}|\geq -\Delta|\hat{V_j}$, which yields
\begin{align}
    |V_j| & \geq |\hat{V_j}|-\Delta \hat{V_j}
    = (1-\Delta)|\hat{V_j}|.\label{eq:voltage_drop_2}
\end{align} Combining \eqref{eq:voltage_drop_1} and \eqref{eq:voltage_drop_2} gives the desired result.
\end{proof}

\subsection{Proof of Lemma \ref{lemma:ineq_voltage_angles}}
\begin{proof} Let $\mathbf{V}\in D$. First, we discuss the inequalities in \eqref{eq:cos_bound}. By the law of cosines, we have
\begin{align*}
|V_j-\hat{V}_j|^2 = |V_j|^2+|\hat{V}_j|^2-2|V_j||\hat{V}_j|\cos(\theta_j-\hat{\theta}_j).
\end{align*} Therefore, we can equivalently write the constraint $|V_j-\hat{V}_j|^2 \leq \Delta^2|\hat{V}_j|^2$ as
\begin{align}
    |V_j|^2+|\hat{V}_j|^2-2|V_j||\hat{V}_j|\cos(\theta_j-\hat{\theta}_j)\leq \Delta^2|\hat{V}_j|^2.\label{eq:equivalent_V}
\end{align} By definition of the cosine and Lemma \ref{lemma:ineq_voltage_magnitude}, we get the inequalities
\begin{align}
|V_j|\cos(\theta_j-\hat{\theta}_j) \leq |V_j| \leq (1+\Delta)|\hat{V}_j|. \label{eq:basic_ineq_V}
\end{align} Combining \eqref{eq:equivalent_V} and \eqref{eq:basic_ineq_V} then yields
\begin{align}
\frac{|V_j|^2}{2|\hat{V}_j|}+\frac{(1-\Delta^2)|\hat{V}_j|}{2} & =  \frac{|V_j|^2+|\hat{V}_j|^2-\Delta^2|\hat{V}_j|^2}{2|\hat{V}_j|} \nonumber \\ & \leq |V_j|\cos(\theta_j-\hat{\theta}_j) \leq  (1+\Delta)|\hat{V}_j|.
\label{eq:bound_cos_guarantee}
\end{align} By Lemma \ref{lemma:ineq_voltage_magnitude}, we have that the first term on the left-hand side of \eqref{eq:bound_cos_guarantee} is bounded by
\begin{align*}
    (1-\Delta)|\hat{V}_j| \leq \frac{|V_j|^2}{2|\hat{V}_j|}+\frac{(1-\Delta^2)|\hat{V}_j|}{2}. 
\end{align*} Hence, we have
\begin{align*}
(1-\Delta)|\hat{V}_j| \leq |V_j|\cos(\theta_j-\hat{\theta}_j) \leq (1+\Delta)|\hat{V}_j|.
\end{align*} Second, we discuss the inequalities in \eqref{eq:sin_bound}. The scalar projection of $V_j$ on $\hat{V}_j$ is given by $v_1:=|V_j|\cos(\theta_j-\hat{\theta}_j)$. Then, define the scalar rejection of the vector $V_j$ on the vector $\hat{V}_j$ as $v_2:= V_j-v_1 = |V_j|\sin(\theta_j-\hat{\theta}_j)$. From the Pythagorean theorem, we know
\begin{align*}
|V_j-\hat{V}_j|^2 = |v_2|^2+|v_1-\hat{V}_j|^2.
\end{align*} As $|V_j-\hat{V}_j|^2$ is the sum of two positive quantities, it can never be smaller than either $|v_2|^2$ or $|v_1-\hat{V}_j|^2$, so the same holds true for $|v_2|, |v_1-\hat{V}_j|$ and $|V_j-\hat{V}_j|$. Since $\mathbf{V}\in D$, it follows
\begin{align*}
||V_j|\sin(\theta_j-\hat{\theta}_j)| = |v_2| \leq |V_j-\hat{V}_j| \leq \Delta|\hat{V}_j|.
\end{align*}
\end{proof}

\subsection{Proof of Lemma \ref{lemma:continuous}}
\begin{proof}
Let $\tilde{\mathbf{s}}\in P$ and $\mathbf{V}\in D$. By assumption, $\Delta<1$, so we have $|V_j| > 0$. The function $x\mapsto 1/x^*$ is continuous on $\mathbb{C}\backslash\{0\}$, and therefore the components of $G$ are continuous on $D$. Thus, $G$ is a continuous operator on $D$.
\end{proof}

\bibliographystyle{plain}
\bibliography{library}

\begin{IEEEbiography}[{\includegraphics[width=1in,height=1.25in,clip,keepaspectratio]{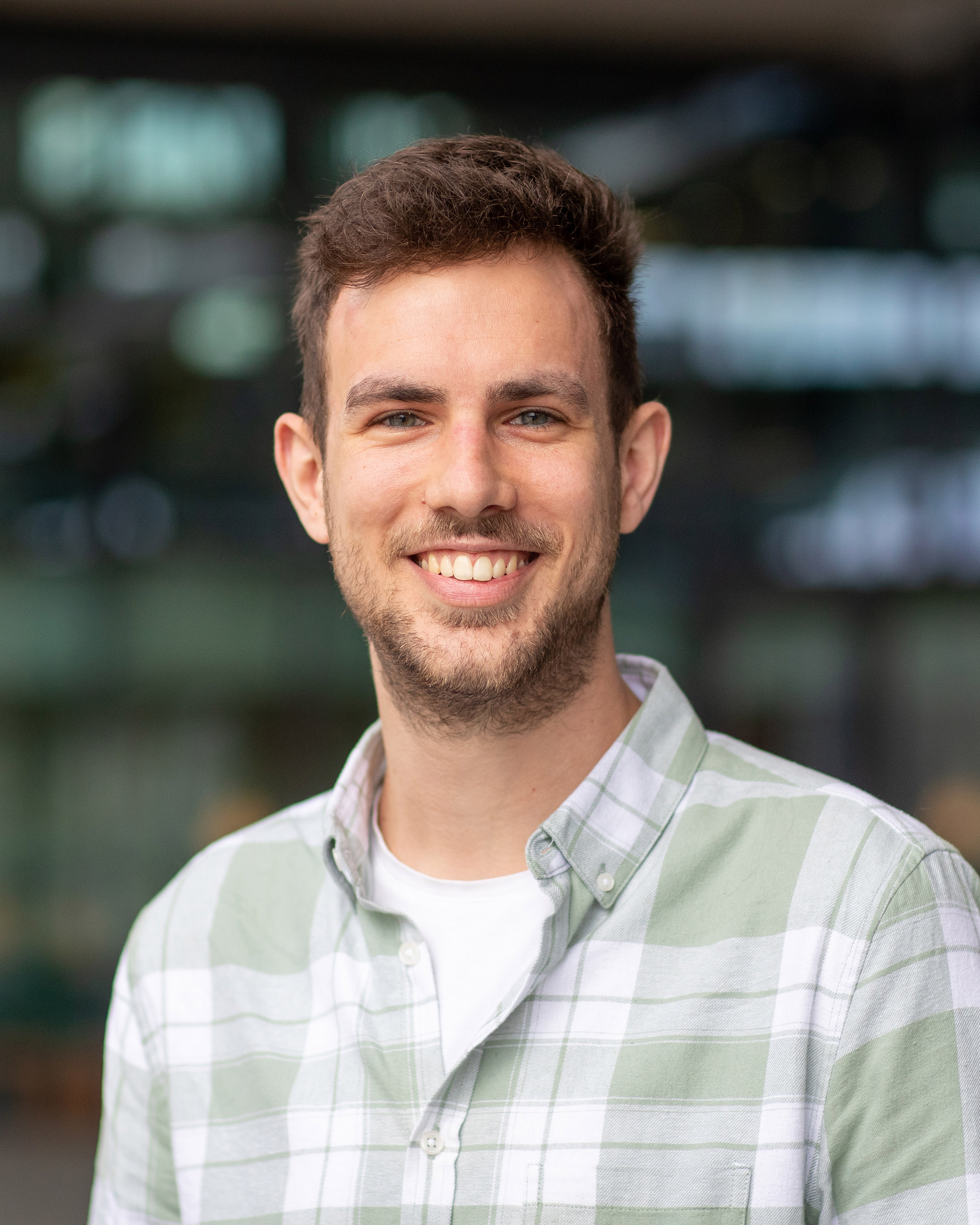}}]{Mark Christianen} received his B.Sc. and M.Sc. degree in Applied Mathematics from Delft University of Technology, Delft, the Netherlands in 2017 and 2019, respectively. 

He is currently working towards his Ph.D. degree at the Department of Mathematics and Computer Science, Eindhoven University of Technology, the Netherlands. 

His research interests include stochastic networks and probability theory.


\end{IEEEbiography}

\begin{IEEEbiography}
[{\includegraphics[width=1in,height=1.25in,clip,keepaspectratio]{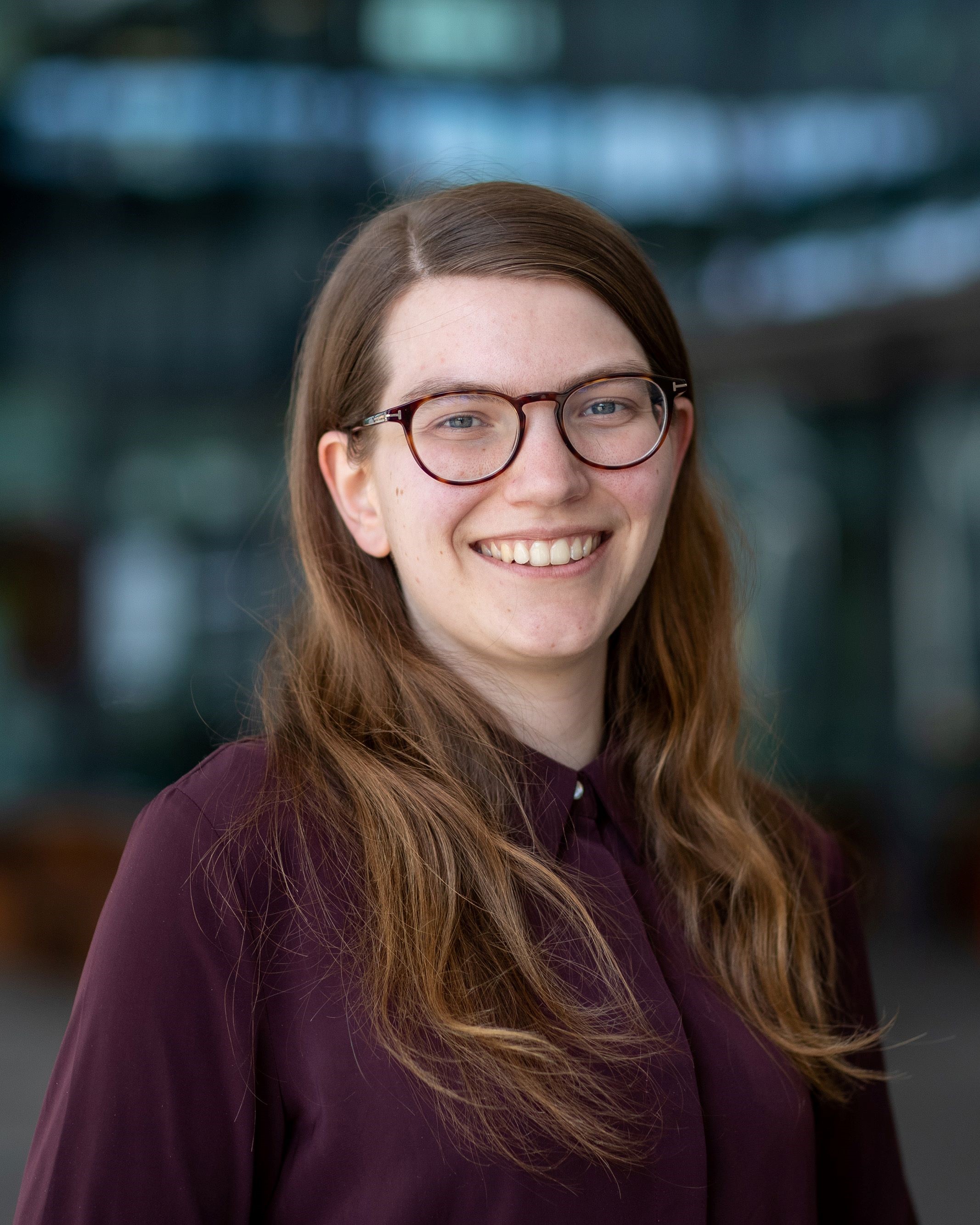}}] {Sanne van Kempen} received her B.Sc. and M.Sc. degree in Applied Mathematics from Eindhoven University of Technology, Eindhoven, the Netherlands in 2020 and 2022, respectively.
 
She is currently working towards her Ph.D. degree at the Department of Mathematics and Computer Science, Eindhoven University of Technology, the Netherlands.
 
Her research interests include queueing networks and adaptive learning techniques.
\end{IEEEbiography}

\begin{IEEEbiography}[{\includegraphics[width=1in,height=1.25in,clip,keepaspectratio]{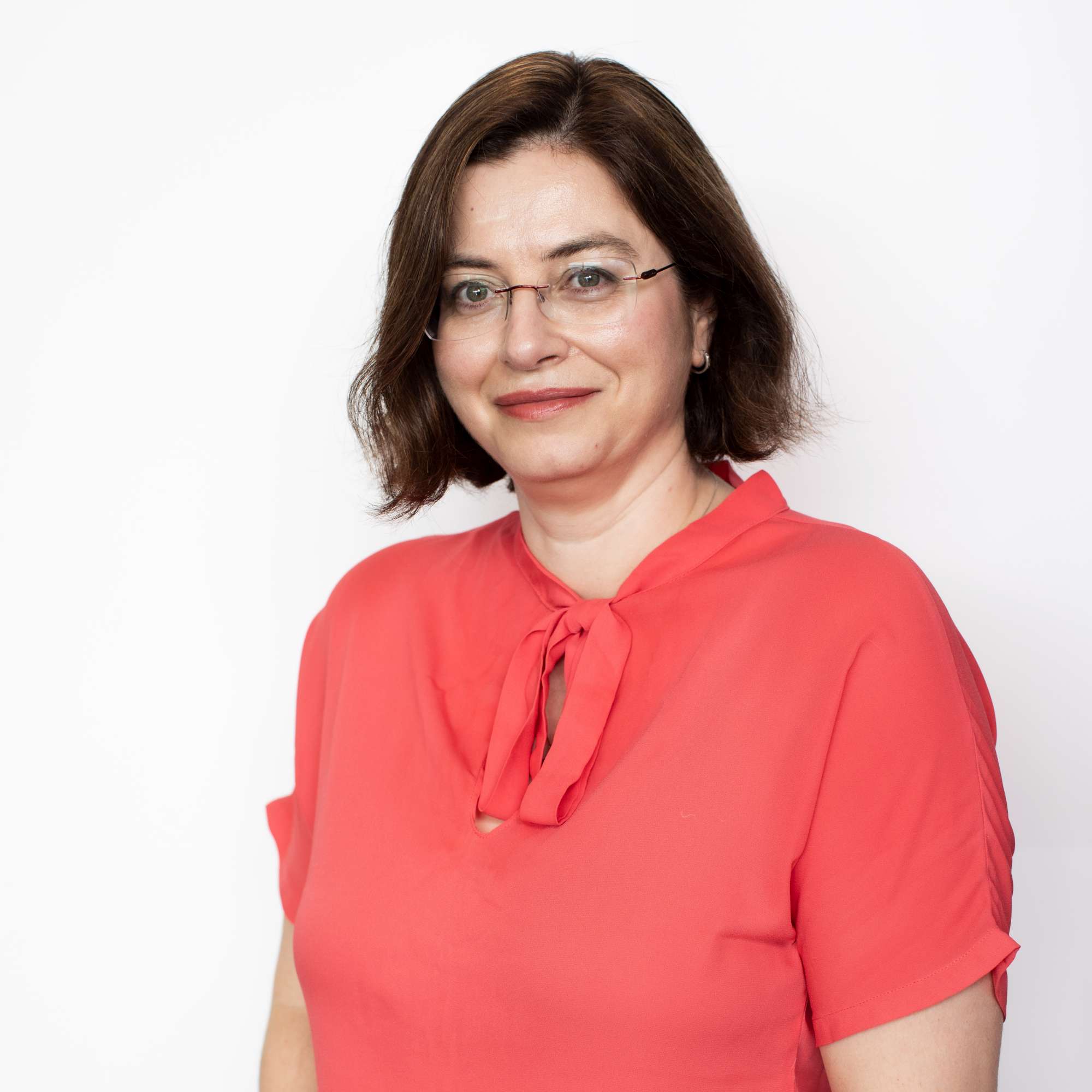}}]{Maria Vlasiou} received her B.Sc. degree (Hons.) from Aristotle University of Thessaloniki in 2002 and completed her Ph.D. in mathematics at TU Eindhoven in 2006. 

She is a Professor of Stochastic Processing Interacting Networks (University of Twente, Eindhoven University of Technology). Her research focuses on the performance of stochastic processes, with applications on manufacturing, telecommunications, mathematical biology, and energy systems. Prof. Vlasiou serves as associate editor in four journals and is the co-chair of the TPCs of IFIP Performance 2023 and NMC 2023. She has received the best paper award in ICORES 2013, the Marcel Neuts student paper award in MAM8, a prize at the 8th conference in Actuarial Science, and the INFORMS UPS G. Smith 2022 award.
\end{IEEEbiography}

\begin{IEEEbiography}[{\includegraphics[width=1in,height=1.25in,clip,keepaspectratio]{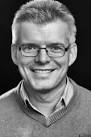}}]{Bert Zwart} (M’18) received his MA degree in econometrics from VU Amsterdam in 1997, and completed his Ph.D. degree in mathematics at TU Eindhoven in 2001. 

He is the Leader of the CWI Stochastics Group (Amsterdam) and Professor with the Eindhoven University of Technology, the Netherlands. His research interests include applied probability and stochastic networks. His work on power systems is focusing on the applications of probabilistic methods to scheduling and reliability issues and rare events such as cascading failures and blackouts.

He has been Stochastic Models area editor for Operations Research, the flagship journal of his profession, from 2009 to 2017 and currently serves on the editorial boards of five journals. He was co-organizer of a special semester on the Mathematics of energy systems taking place in Cambridge, U.K., Spring 2019.

\end{IEEEbiography}

\end{document}